\documentclass[12pt]{amsart}
\usepackage{amsmath,amssymb,amsbsy,amsfonts,latexsym,amsopn,amstext,
                                               amsxtra,euscript,amscd,bm}
\usepackage[margin=1in]{geometry}                    
\usepackage{url}
\usepackage[colorlinks,linkcolor=blue,anchorcolor=blue,citecolor=blue]{hyperref}
\usepackage{color}
\usepackage{enumerate}

\begin{document}

\newtheorem{theorem}{Theorem}
\newtheorem{lemma}{Lemma}
\newtheorem{conjecture}{Conjecture}
\newtheorem{proposition}{Proposition}
\newtheorem{corollary}{Corollary}
\newtheorem{claim}{Claim}
\theoremstyle{definition}
\newtheorem{remark}{Remark}
\newtheorem{definition}{Definition}
\newtheorem{algorithm}{Algorithm}

\def\E{{\mathbb E}}
\def\F{{\mathbb F}}
\def\R{{\mathbb R}}
\def\Z{{\mathbb Z}}
\def\C{{\mathbb C}}
\def\N{{\mathbb N}}


\title[Bilinear Kloosterman sums and uniformity of a random walk]{Bilinear Kloosterman Sums over Small Boxes and uniformity of a random walk}

\author{Ali Mohammadi}

\email{ali.mohammadi.np@gmail.com}


\pagenumbering{arabic}

\begin{abstract}
Given an additive character $\psi$ of an arbitrary finite field $\F_{p^n}$ and elements $a,b\in \F_{p^n}$, $b\neq 0$, we prove a bound on bilinear Kloosterman sums $\sum_{x\in B_1}\sum_{y\in B_2}\psi(axy+bx^{-1}y^{-1})$, where $B_1, B_2$ denote boxes in $\F_{p^n}$. Here, a box is a coordinate parallelepiped obtained by restricting the coefficients of field elements, with respect to a fixed basis of $\F_{p^n}$ over $\F_p$, to intervals in $\F_p$. Our estimates are nontrivial when $|B_1||B_2|>p^{n/2+\epsilon}$ and hence in a range not accessible by the Weil bound.

 We also consider the random walk on $\F_{p^n}$ defined by $S_k=S_0+W_1+\cdots+W_k$,
where $W_i=aX_iY_i+b(X_iY_i)^{-1}$ and $X_i,Y_i$ are independent uniformly distributed random variables on $B_1$ and $B_2$ respectively. We show that the nontrivial Fourier coefficients of $S_k$ decay exponentially. Consequently, every nonzero $\F_p$-linear projection of $S_k$, as well as the full distribution of $S_k$ converge to the uniform distribution on $\F_p$ and $\F_{p^n}$, respectively. We also obtain bounds on the rate at which the entropy of $S_k$ converges to its maximal value.
\end{abstract}

\maketitle

\section{Introduction}
Let $p$ denote a prime, $n$ a natural number and $q = p^n$. We use $\F_q$ to denote the finite field of $q$ elements and $\F_q^*$ as the multiplicative group $\F_q\setminus\{0\}.$  We write $e_p(x) = exp(2\pi i x/p)$ and $\psi(x) = e_p(Tr(x))$, where $Tr(x) = x+x^{p} + \cdots + x^{p^{n-1}}$ denotes the trace of $x\in \F_q$ over $\F_p$.

In \cite[Theorem~A.1]{Bou}, Bourgain proved that, given an arbitrary $\epsilon>0$ and intervals $I, J \subset \left[1, p\right]$ with $|I||J|>p^{1/2+\epsilon}$, one has
\begin{equation}
    \label{eqn:BourgainBKS}
    \max_{(a,b)\in \F_p\times\F_p^*}  \bigg|\sum_{x\in I}\sum_{y\in J}e_p(axy+bx^{-1}y^{-1})\bigg|\ll_{\epsilon}p^{-\delta}|I||J|,
\end{equation}
for some $\delta = \delta(\epsilon)>0$.

We extend Bourgain's theorem to coordinate boxes in arbitrary finite fields. Given an arbitrary basis $\Omega=\{\omega_1, \dots, \omega_n\}$ of $\F_{q}$ over $\F_p$, each $x\in \F_q$ has the unique representation
\begin{equation*}
    x = x_1\omega_1 + \cdots + x_n\omega_n,\quad \text{with} \quad 0\leq x_i <p.
\end{equation*}
For $1\leq H \leq p$ and
\begin{equation}
\label{eqn:Ndef}
   \bm{N} = (N_1, \dots, N_n)\quad\text{with} \quad 0\leq N_i < N_i + H \leq p, \quad 1\leq i \leq n,
\end{equation}
we define a box $B(\bm{N}, H)$ as the set
\begin{equation}
    \label{def:Box}
    \bigg\{\sum_{i=1}^n x_i\omega_i: x_i\in \left[N_i+1, N_i +H\right], 1\leq i\leq n\bigg\}.
\end{equation}
Geometrically, such sets are the natural higher-dimensional analogue of intervals, and may be viewed as parallelepipeds in the coordinate system determined by $\Omega$. Our main theorem establishes a power-saving estimate for weighted bilinear Kloosterman sums over arbitrary boxes whenever $|B_1||B_2|\ge q^{1/2+\varepsilon}$, thereby extending Bourgain's result from $\F_p$ to $\F_q$.

Our Kloosterman sums estimate
implies that the nonlinear transformation $(x,y)\longmapsto axy+b(xy)^{-1}$ destroys this additive structure in a Fourier-analytic sense. Let
$X_i$ and $Y_i$ be independent random variables uniformly distributed
on two sufficiently large boxes $B_1,B_2\subset\F_q^*$, and define $W_i=aX_iY_i+b(X_iY_i)^{-1}$. We consider the random walk $S_k=S_0+W_1+\cdots+W_k$, where $S_0$ is an arbitrary $\F_q$-valued random variable. We prove that repeated independent increments progressively destroy the coordinate structure inherited from the boxes, driving the distribution towards uniformity and the entropy towards its maximum.

\subsection*{Notation}
For real-valued sequences $\alpha = \{\alpha_n\}_{n=1}^{\infty}$ and $\beta = \{\beta_n\}_{n=1}^{\infty}$, we write $\alpha\ll \beta$, $\beta\gg \alpha$, $\alpha = O(\beta)$ or $\beta = \Omega(\alpha)$ if there exists an $n_0\geq 1$ and a constant $c>0$ such that $\alpha_n\leq c\beta_n$ for all $n\geq n_0$. If the constant $c$, depends on some parameter $\epsilon$, then we write, for example,  $\alpha\ll_{\epsilon} \beta$. If $\alpha\ll \beta$ and $\alpha\gg \beta$, then we use $\alpha \approx \beta$. We also write $\alpha \lesssim \beta$ or $\beta \gtrsim \alpha$, if there exist $c_1, c_2>0$ and $n_0\geq 1$ such that $\alpha_n \leq c_1 (\log \beta_n)^{c_2}\beta_n$ for all $n\geq n_0$.

For a random variable $Z$ taking values in a finite set, we write $\mathcal L(Z)$ for its probability law; that is, $\mathcal L(Z)(x)$ represents $\Pr(Z=x)$.  We also write $U_p$ and $U_q$ for the uniform probability distribution on $\F_p$ and $\F_q$, respectively.

\section{Main results}
\subsection{Bilinear Kloosterman sum estimate}
We prove the following $\F_q$ variant of Bourgain's estimate~\eqref{eqn:BourgainBKS}.
\begin{theorem}
\label{thm:BKSOB}
Let $B_1, B_2\subset \F_q$ denote arbitrary boxes of the form \eqref{def:Box} satisfying 
\begin{equation}
    \label{eqn:B1B2LBC}
|B_1||B_2|\geq q^{1/2+\epsilon}
\end{equation}
for some $\epsilon>0$. Let $\alpha$ and $\beta$ be complex-valued functions satisfying $|\alpha(x)|, |\beta(x)| \leq 1$ for all $x\in \F_q$. Then, there exists $\delta = \delta(\epsilon)>0$ such that
\begin{equation*}
  \max_{(a,b)\in \F_q\times\F_q^*}  \bigg|\sum_{x\in B_1}\sum_{y\in B_2}\alpha(x)\beta(y)\psi(axy+bx^{-1}y^{-1})\bigg|\ll_{\epsilon}p^{-\delta}|B_1||B_2|.
\end{equation*}
\end{theorem}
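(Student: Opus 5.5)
We follow Bourgain's strategy from \cite[Theorem~A.1]{Bou} and adapt each step from intervals to boxes. The key structural fact is that a box $B=B(\bm N,H)$ is an approximate subgroup of $(\F_q,+)$: $B-B\subseteq B(\bm{0}',2H)$ is a box with $|B-B|\le 2^n|B|$, and for any box $B'=B(\bm{0},H')$ with $H'\le H/2$ we have $|B\cap(B+z)|\ge |B|/2^n$ for every $z\in B'$ and every $B$ of side $H$. So shifting $x\mapsto x+z$ with $z$ in a small box costs only a bounded factor. Note that $n$ is fixed, so $2^n$ is absorbed into $\ll_\epsilon$. We may assume $|B_1|\ge|B_2|$. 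By Cauchy--Schwarz in $y$ (which removes $\beta$), the problem reduces to bounding
\[
\sum_{y\in B_2}\Bigl|\sum_{x\in B_1}\alpha(x)\psi(axy+b x^{-1}y^{-1})\Bigr|^2 .
\]

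\textbf{Step 1: amplification by shifts and multiplicative dilates.} Following Bourgain, we write $x\mapsto x+z$ with $z$ in a small box $B_3=B(\bm 0,H_3)$. We average over $z\in B_3$, losing only the boundary term $O(H_3/H_1)$ times the trivial bound per coordinate. This turns the inner sum into a sum over $(x,z)$ in which $x^{-1}$ is replaced by $(x+z)^{-1}$. We then use the multiplicative trick: write $xy=(xt)(yt^{-1})$ for $t$ ranging over a set $T$ of elements of a small box such that $xt$ and $yt^{-1}$ still lie approximately in boxes. This step is delicate in $\F_q$, because multiplication by $t$ does not preserve coordinate boxes. We therefore use the shift trick only, in the form
\[
axy+b(xy)^{-1},\quad x\to x+uv,
\]
with $u,v$ in small boxes. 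After Hölder's inequality with exponent $2k$ in the remaining variables, the sum is controlled by the number of solutions of a multiplicative-energy-type system, namely the number of
\[
\frac{1}{x_1+u_1}+\cdots+\frac{1}{x_k+u_k}=\frac{1}{x_{k+1}+u_{k+1}}+\cdots
\]
type equations, together with a complete sum over $y$ running through all of $\F_q$. That complete sum is handled by orthogonality of $\psi$.

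\textbf{Step 2: the counting problem.} The resulting quantity is the number $J_k$ of solutions to
\[
\sum_{i=1}^k (x_i)^{-1}=\sum_{i=k+1}^{2k}(x_i)^{-1},\quad x_i\in B,
\]
and the analogous count for the linear part. We need $J_k\le |B|^{2k}q^{-1}\cdot q^{o(1)}+|B|^{k}q^{o(1)}$-type bounds, or more modestly a saving $J_k\le |B|^{2k-1-\kappa}$ with $|B|^{\kappa}\gg q^{\epsilon'}$. In $\F_p$, Bourgain obtains this from sum-product, through Bourgain--Garaev bounds for reciprocals in intervals. For $\F_q$ we would use the sum-product theorem in $\F_q$ (Bourgain--Katz--Tao / Bourgain--Glibichuk--Konyagin). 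These require that the sets involved not concentrate on a proper subfield coset. Boxes are not contained in subfield cosets once $H\ge 2$ and $n\ge2$, and the reciprocal set $\{x^{-1}:x\in B\}$ has small intersection with any affine copy of a subfield $\F_{p^d}$, $d<n$. The latter follows because a box meets a $d$-dimensional $\F_p$-subspace in at most $H^{d}$ points, which gives a saving of $|B|^{1-d/n}$.

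\textbf{Main obstacle.} The hard step is Step 2: a power-saving bound for additive energies of reciprocals $\{x^{-1}:x\in B\}$ of boxes in the regime $|B|\approx q^{1/4+\epsilon}$, uniformly in the subfield structure. We would try to prove it by induction on $k$ via the Bourgain--Glibichuk sum-product machinery, with the box structure used only through the subspace-intersection bound above. The final saving is $p^{-\delta}$ rather than $q^{-\delta}$ because of these subfield and subspace losses.
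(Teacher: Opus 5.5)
\paragraph{Verdict.} There is a genuine gap, and it sits exactly where you placed the ``main obstacle'': Step~2 is not proved, and the route you propose for it cannot work as stated.

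\paragraph{Why Step~2 fails as proposed.} Non-concentration on affine copies of proper subfields says nothing about additive energy. A box $P$ meets every $d$-dimensional affine $\F_p$-subspace in at most $H^d$ points, yet its additive energy satisfies $E_+(P)\gg_n|P|^3$. So if the box structure enters ``only through the subspace-intersection bound'', no saving for $J_k$ can follow. Also, that bound was derived for $B$, not for $B^{-1}$: inversion does not send $\alpha\F_{p^d}+\beta$ with $\beta\ne0$ to an affine subspace.

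Even granting some saving, a small $\kappa$ would not give the stated range. In the paper's scheme the exceptional frequencies for the $x$-sums number $q^{1+o(1)}|B_1|^{-2}$. To reach $|B_1||B_2|\ge q^{1/2+\epsilon}$ you therefore need the signed reciprocal sums of $B_2$ not to concentrate on sets of size nearly $|B_2|^2$, i.e.\ $\kappa\approx1$.

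\paragraph{Defects in Step~1.}
\begin{enumerate}
\item For an arbitrary basis and $n\ge2$, a product $uv$ of two elements of small boxes need not have small coordinates, since the coordinates of $\omega_i\omega_j$ can be arbitrary residues. Hence $B+uv$ need not overlap $B$.
\item ``The analogous count for the linear part'' is not a mechanism for removing $axy$. Your shift actually turns it into $axy+auvy$.
\item Your Step~1 equations carry a separate $x_i$ in each term, and Step~2 then drops the shift entirely.
\end{enumerate}

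\paragraph{What the paper does instead.} The paper never counts $J_k$.
\begin{enumerate}
\item \emph{Large spectrum of reciprocals.} It bounds $\{\xi:|\sum_{x\in B}\psi(\xi/x)|>p^{-\epsilon}|B|\}$ by $q^{1+o(1)}|B|^{-2}$ (Lemma~\ref{lem:LSpecUB}). The shift is $x\mapsto x+ab$, with $a$ in a small box $B_*$ and $b$ an \emph{integer} in $[1,p^{\tau}]\subset\F_p$. Then $ab$ is a coordinatewise scaling of $a$ and stays in a small box.
\item \emph{Change of variables.} It substitutes $(\lambda_1,\lambda_2)=(\xi/a,x/a)$. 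That dilates of boxes are not boxes is irrelevant: only the $\ell^2$-multiplicity of this map matters. That multiplicity is at most $|\Omega|E_\times(B,B_*)\ll\log p\,|\Omega||B||B_*|$ by Konyagin's bound (Lemma~\ref{lem:KonMEB}). This is the box-specific input your plan lacks.
\item \emph{Counting with a free shift.} H\"older is applied only to the innermost sum over $b$, with $(\lambda_1,\lambda_2)$ as outer variables. After completing the sum over $\lambda_1\in\F_q$, all $2k$ terms share one free $\lambda_2\in\F_q$. Counting $\sum_{i\le 2k}\pm(\lambda_2+b_i)^{-1}=0$ is then elementary for arbitrary sets (Lemma~\ref{lem:recSol}).
\item \emph{Transfer to non-concentration.} Fourier analysis (Lemma~\ref{lem:LSpectoSConv}, Corollary~\ref{cor:convconc}) turns the spectrum bound into non-concentration of reciprocal sums at scale $|B_2|^2$.
\item \emph{Bilinear side.} $B_1$ is cut into subcubes so that $c=x_1-x_2$ lies in a small box. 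After two Cauchy--Schwarz steps and H\"older, the term $ac\sum\pm y_i$ is a phase depending only on $(c,y_1,\dots,y_{2k})$. It disappears when one takes absolute values of the $x$-sum.
\item \emph{Contradiction.} What remains is $\sum_{x\in B_1}\psi(c\xi/(x(x+c)))$ with $\xi=b\sum\pm y_i^{-1}$. Its large spectrum is again $\lesssim q^{1+o(1)}|B_1|^{-2}$ by the same method (Lemma~\ref{lem:LSpec2}). This bound and the non-concentration bound are incompatible once $|B_1||B_2|\ge q^{1/2+\epsilon}$.
\end{enumerate}
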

As a consequence, we obtain the following result on expansion of sum sets of reciprocals over boxes.
\begin{corollary}
\label{thm:kfssrb}
Let $B$ denote any box of the form \eqref{def:Box} and let 
$$
kB^{-1} = \bigg\{\frac{1}{x_1}+ \cdots +\frac{1}{x_k}: (x_1, \dots, x_k)\in B^k\bigg\}.
$$
Then, given any $\delta>0$, there exists $k\in \N$ such that
\begin{equation*}
    |kB^{-1}| \geq p^{-\delta}\min \{|B|^{2}, q\}.
\end{equation*}
\end{corollary}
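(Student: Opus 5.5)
The plan is the standard Fourier-analytic lower bound for sumsets via additive energy. The reciprocal sums will be controlled by the bound of Theorem~\ref{thm:BKSOB} with $a=0$, through a product set, since $(xy)^{-1}$ is the phase that appears there. The obstacle, flagged below, is that $\psi(b/x)$ with $x$ in a single box is not directly of the bilinear form.

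\emph{Step 1 (energy).} For a finite set $A\subset\F_q^*$ let $T_k(A)$ be the number of solutions of $\sum_{i\le k}a_i^{-1}=\sum_{i\le k}a_{k+i}^{-1}$ with $a_j\in A$. Cauchy--Schwarz gives $|kA^{-1}|\ge |A|^{2k}/T_k(A)$, and orthogonality gives
\begin{equation*}
T_k(A)=\frac1q\sum_{b\in\F_q}\Big|\sum_{x\in A}\psi(bx^{-1})\Big|^{2k}\le \frac{|A|^{2k}}{q}+\max_{b\ne0}\Big|\sum_{x\in A}\psi(bx^{-1})\Big|^{2k-2}\,|A|.
\end{equation*}
The last term uses Parseval for the $b\neq 0$ part: $\frac1q\sum_b|\sum_{x\in A}\psi(bx^{-1})|^2=|A|$, since $x\mapsto x^{-1}$ is injective. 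So if the nontrivial sums are at most $p^{-\delta_0}|A|$, then taking $k$ with $(2k-2)\delta_0\ge \log_p q$ gives $T_k(A)\ll |A|^{2k}/q+|A|^{2k-1}q^{-1}$. Hence $|kA^{-1}|\gg\min\{q,|A|\}$. Both $k$ and $\delta_0$ are allowed to depend on $n$, since $q=p^n$.

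\emph{Step 2 (reduction to Theorem~\ref{thm:BKSOB}).} If $|B|^2\le q^{1/2+\epsilon}$ then $\min\{|B|^2,q\}\le q^{1/2+\epsilon}$, and already $|2B^{-1}|\ge|B|^2/T_2(B)$, with the trivial bound $T_2(B)\ll |B|^2$ via the constraint, gives $|B|$. I would handle this small range by adjusting $\epsilon$ against the target $\delta$, or by the same argument at smaller scale. In the main range $|B|^2\ge q^{1/2+\epsilon}$, fix a sub-box $B_0\subset B$ of comparable size. Instead of $B$ itself, I want to use the weighted multiset $P=\{xy:x\in B,y\in B_0'\}$, where $B_0'$ is a box chosen so that the reciprocals $(xy)^{-1}$ are sums of few reciprocals of elements of $B$. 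Then $kP^{-1}\subset k'B^{-1}$ for some $k'$ bounded in terms of $k$. For the weighted version of Step 1, Theorem~\ref{thm:BKSOB} with $a=0$, $\alpha=\beta=1$ gives $|\sum_{x\in B}\sum_{y\in B_0'}\psi(b(xy)^{-1})|\ll p^{-\delta}|B||B_0'|$ for all $b\ne0$. Step 1 then applies to the measure $\mu$ of $xy$, with $T_k$ replaced by its weighted analogue. This yields a support of size $\gg p^{-o(1)}\min\{q,|B||B_0'|\}$, and taking $|B_0'|\approx|B|$ gives the claimed $p^{-\delta}\min\{|B|^2,q\}$.

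\emph{Main obstacle.} The hard step is the embedding in Step 2: for boxes in $\F_q$, products $xy$ do not stay in a box, unlike Bourgain's setting $[1,H^{1/2}]^2\subset[1,H]$. If no such embedding is available, I would try the alternative of applying Theorem~\ref{thm:BKSOB} with nontrivial weights, after first translating and dilating $B$ by a field element. Dilation $x\mapsto cx$ maps $B^{-1}$ to $c^{-1}B^{-1}$ and preserves sumset sizes, and the aim would be a reduction to a box with $N_i=0$. Then the elements $t x$ with $t\in[1,T]\subset\F_p$ and $x$ in a smaller box lie in $B$. This realizes a product of a box with an integer interval inside $B$, and I would run a bilinear estimate over that product. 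It requires a variant of Theorem~\ref{thm:BKSOB} in which one factor is an $\F_p$-interval, which I expect its proof to supply.
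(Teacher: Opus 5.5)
\textbf{There is a genuine gap.} It is more than the technicality you flag: the route through Theorem~\ref{thm:BKSOB} cannot be completed as set up.

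First, the embedding in Step~2 is not available. For a general box the products $xy$ ($x\in B$, $y\in B_0'$) do not lie in $B$, and there is no evident way to write $(xy)^{-1}$ as a sum of boundedly many elements of $B^{-1}$. Your fallback of reducing to $N_i=0$ also fails. A dilation $x\mapsto cx$ only replaces $\Omega$ by $c\Omega$ and leaves the offsets $N_i$ unchanged. A translation $x\mapsto x+c$ does not preserve $|kB^{-1}|$ at all. (Bourgain's inclusion $[1,H^{1/2}]^2\subset[1,H]$ genuinely uses that the interval starts at $1$.)

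Second, and independently, Theorem~\ref{thm:BKSOB} requires $|B_1||B_2|\ge q^{1/2+\epsilon}$. It is therefore silent for boxes with $|B|^2<q^{1/2}$, where the corollary still asserts the nontrivial bound $p^{-\delta}|B|^2$. Your treatment of this range does not close it. The trivial bound is $T_2(B)\le|B|^3$, not $|B|^2$, and it gives only $|2B^{-1}|\ge|B|$. No choice of $\epsilon$ brings this range within the scope of Theorem~\ref{thm:BKSOB}.

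\textbf{The missing idea} is that you never need a uniform bound on $\sum_{x\in B}\psi(b/x)$ over all $b\neq0$, which is not available for a single box. It suffices to bound the \emph{number} of $b$ for which this sum is large. That is Lemma~\ref{lem:LSpecUB}: for $H\le p^{1/2}$, the set $\Lambda$ of $\xi$ with $|\sum_{x\in B}\psi(\xi/x)|>p^{-\epsilon}|B|$ satisfies $|\Lambda|\lesssim_n q^{1+\delta}|B|^{-2}$. It is proved by averaging over shifts $x\mapsto x+ab$, Konyagin's multiplicative energy bound for boxes, and the count in Lemma~\ref{lem:recSol}.

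The paper then applies Lemma~\ref{lem:LSpectoSConv} (via Corollary~\ref{cor:convconc}) with $A=B^{-1}$ and $S=kB^{-1}$. Every $k$-tuple lands in $S$, which forces $|kB^{-1}|\ge p^{-\delta}|B|^2$. If $H>p^{1/2}$, it passes to a sub-box $B_0$ with $H_0\approx p^{1/2}$, so that $|B_0|^2\approx q$ and $kB_0^{-1}\subset kB^{-1}$. Theorem~\ref{thm:BKSOB} plays no role in this proof.

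Your Step~1 framework would in fact work with this input. Split the $b$-sum over $\Lambda$ (which contains $0$) and its complement. This gives $T_k(B)\le|\Lambda||B|^{2k}/q+p^{-\epsilon(2k-2)}|B|^{2k-1}$, and hence $|kB^{-1}|\gtrsim\min\{q^{-\delta}|B|^2,\,p^{\epsilon(2k-2)}|B|\}$. That is the corollary once $\epsilon$ is small and $k$ is large.
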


\subsection{Uniformity bounds for a random walk}
For probability distributions $P$ and $Q$ on a finite set, define
the relative entropy
\[
D(P\Vert Q)
=
\sum_xP(x)\log\frac{P(x)}{Q(x)},
\]
the chi-square divergence
\[
\chi^2(P\Vert Q)
=
\sum_x\frac{(P(x)-Q(x))^2}{Q(x)},
\]
and the total variation distance
\[
\|P-Q\|_{\mathrm{TV}}
=
\frac12\sum_x|P(x)-Q(x)|.
\]

We write $\pi_j(z)=z_j$
for the $j$-th coordinate map. For an $\F_q$-valued random variable $Z$, define its coordinate
entropy with respect to the basis $\Omega$ by
\[
H_\Omega(Z)
=
\sum_{j=1}^n H(\pi_j(Z)),
\]
where
\[
H(\pi_j(Z))
=
-\sum_{t\in\F_p}
\Pr(\pi_j(Z)=t)
\log\Pr(\pi_j(Z)=t).
\]
Since each coordinate takes values in $\F_p$, $H_\Omega(Z)\le n\log p$. We study how rapidly the random walk $S_k=S_0+W_1+\cdots+W_k$ loses the coordinate-restrained structure inherited from the boxes. We quantify
this mixing in several complementary ways. We measure the
distance between the distribution of the walk and the uniform
distribution using the chi-square divergence and total
variation distance and also prove lower bounds on its entropy.

The next theorem shows that every nonzero $\F_p$-linear observation of
the walk becomes exponentially close to uniform. In particular, every
coordinate in every basis of $\F_q$ approaches the uniform distribution
on $\F_p$, while the total coordinate entropy approaches its maximal value.

\begin{theorem}
\label{thm:digit-mixing}
Let $B_1,B_2\subset\F_q^*$ be arbitrary boxes of the form \eqref{def:Box} satisfying $|B_1||B_2|\ge q^{1/2+\epsilon}$for some $\varepsilon>0$. For $a\in\F_q, b\in\F_q^*$, let $W_i=aX_iY_i+b(X_iY_i)^{-1}$, where the pairs $(X_i,Y_i)$ are independent, $X_i$ is uniform on
$B_1$, and $Y_i$ is uniform on $B_2$. Let $S_0$ be an arbitrary $\F_q$-valued random variable independent
of the increments, and define $S_k=S_0+W_1+\cdots+W_k$. Set $\rho=\min\{1,C_\epsilon p^{-\delta}\}$. Then, for every nonzero $\F_p$-linear map $L:\F_q\longrightarrow\F_p$, one has
\begin{equation}
\label{eq:projection-chi-square}
\chi^2\bigl(\mathcal L(L(S_k))\Vert U_p\bigr)
\le
(p-1)\rho^{2k},
\end{equation}
and hence
\begin{equation}
\label{eq:projection-TV}
\left\|
\mathcal L(L(S_k))-U_p
\right\|_{\mathrm{TV}}
\le
\frac12\sqrt{p-1}\,\rho^k.
\end{equation}
Moreover,
\begin{equation}
\label{eq:projection-entropy}
H(L(S_k))
\ge
\log p-
\log\left(1+(p-1)\rho^{2k}\right).
\end{equation}
In particular, for every basis
$\Omega=\{\omega_1,\dots,\omega_n\}$,
\begin{equation}
\label{eq:coordinate-entropy}
H_\Omega(S_k)
\ge
n\log p-
n\log\left(1+(p-1)\rho^{2k}\right).
\end{equation}
\end{theorem}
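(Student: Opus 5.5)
The approach is Fourier analysis on $\F_p$, with Theorem~\ref{thm:BKSOB} supplying the decay of the characteristic function of each increment. Every nonzero $\F_p$-linear map $L:\F_q\to\F_p$ has the form $L(z)=Tr(cz)$ for a unique $c\in\F_q^*$. For $t\in\F_p^*$ the Fourier coefficient of $L(S_k)$ at $t$ is therefore
\[
\widehat{\mathcal L(L(S_k))}(t)=\E\, e_p(tL(S_k))=\E\,\psi(tcS_k).
\]
Since $S_0,W_1,\dots,W_k$ are independent and the $W_i$ are identically distributed, this factors as $\E\,\psi(tcS_0)\cdot\bigl(\E\,\psi(tcW_1)\bigr)^k$. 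The first factor has modulus at most $1$. For the second we write
\[
\E\,\psi(tcW_1)=\frac{1}{|B_1||B_2|}\sum_{x\in B_1}\sum_{y\in B_2}\psi\bigl(tca\,xy+tcb\,x^{-1}y^{-1}\bigr).
\]
Since $tcb\neq0$, this is exactly the sum of Theorem~\ref{thm:BKSOB} with $\alpha=\beta=1$ and the pair $(tca,tcb)\in\F_q\times\F_q^*$; the boxes avoid $0$, so the inverses make sense. Hence $|\E\,\psi(tcW_1)|\le C_\epsilon p^{-\delta}$ uniformly in $t$ and $c$. Combined with the trivial bound $1$, this gives $|\widehat{\mathcal L(L(S_k))}(t)|\le\rho^k$ for every $t\neq0$.

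Next, Parseval on $\F_p$ gives, for $P=\mathcal L(L(S_k))$,
\[
\chi^2(P\Vert U_p)=p\sum_x\bigl(P(x)-1/p\bigr)^2=\sum_{t\neq0}|\hat P(t)|^2\le(p-1)\rho^{2k},
\]
which is \eqref{eq:projection-chi-square}. Cauchy--Schwarz gives $2\|P-U_p\|_{\mathrm{TV}}=\sum_x|P(x)-1/p|\le\sqrt{\chi^2(P\Vert U_p)}$, and \eqref{eq:projection-TV} follows.

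For the entropy bound we use $H(P)=\log p-D(P\Vert U_p)$. By Jensen's inequality (concavity of $\log$),
\[
D(P\Vert U_p)=\sum_xP(x)\log\bigl(pP(x)\bigr)\le\log\Bigl(\sum_x pP(x)^2\Bigr)=\log\bigl(1+\chi^2(P\Vert U_p)\bigr).
\]
Together with \eqref{eq:projection-chi-square} this yields \eqref{eq:projection-entropy}.

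Finally, each coordinate map $\pi_j$ is a nonzero $\F_p$-linear functional, being the dual-basis coordinate $z\mapsto Tr(\omega_j^*z)$. Applying \eqref{eq:projection-entropy} with $L=\pi_j$ and summing over $j$ gives \eqref{eq:coordinate-entropy}.

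The only substantive input is the uniformity in $(a,b)$ in Theorem~\ref{thm:BKSOB}: it is what lets the single bound cover every rescaling $(tca,tcb)$. The step needing care is the identification of $\F_p$-linear functionals with $Tr(c\,\cdot)$, which uses the nondegeneracy of the trace form. One should also check that the implied constant $C_\epsilon$ from Theorem~\ref{thm:BKSOB} is absorbed into $\rho$ for all $p$, including the small values where the $\ll_\epsilon$ statement holds only for $p\ge n_0$; capping $\rho$ at $1$ handles this.
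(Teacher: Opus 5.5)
Your proposal is correct and follows essentially the same route as the paper: you write $L$ as $z\mapsto Tr(cz)$, factor $\E\, e_p(tL(S_k))$ by independence, and bound each increment's character sum uniformly via Theorem~\ref{thm:BKSOB} applied to $(tca,tcb)$. From there you pass to $\chi^2$ by Parseval, to total variation by Cauchy--Schwarz, and to entropy via $D\le\log(1+\chi^2)$. The only difference is cosmetic: you derive that last inequality directly from Jensen's inequality, whereas the paper cites the monotonicity of R\'enyi divergence in its order, which amounts to the same fact.
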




While Theorem~\ref{thm:digit-mixing} controls every one-dimensional
linear projection of the walk, the next theorem extends this to the
entire $\F_q$-valued distribution, showing that the random walk itself
becomes asymptotically uniform on~$\F_q$.

\begin{theorem}
\label{thm:full-field-mixing}
Under the assumptions of Theorem~\ref{thm:digit-mixing},
\begin{equation}
\label{eq:full-chi-square}
\chi^2\bigl(\mathcal L(S_k)\Vert U_q\bigr)
\le
(q-1)\rho^{2k}.
\end{equation}
Consequently,
\begin{equation}
\label{eq:full-TV}
\left\|
\mathcal L(S_k)-U_q
\right\|_{\mathrm{TV}}
\le
\frac12\sqrt{q-1}\,\rho^k,
\end{equation}
and
\begin{equation}
\label{eq:full-entropy}
H(S_k)
\ge
\log q-
\log\left(1+(q-1)\rho^{2k}\right).
\end{equation}
\end{theorem}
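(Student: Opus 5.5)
The plan is to argue entirely on the Fourier side of $\F_q$. The additive characters of $\F_q$ are $\psi_\xi(z)=\psi(\xi z)$ for $\xi\in\F_q$. For an $\F_q$-valued random variable $Z$ with law $P$, put $\widehat P(\xi)=\E\,\psi(\xi Z)$. Parseval gives
\[
\chi^2(P\Vert U_q)=q\sum_z\Bigl(P(z)-\frac1q\Bigr)^2=\sum_{\xi\neq0}|\widehat P(\xi)|^2 ,
\]
because $\widehat{U_q}(\xi)=0$ for $\xi\ne0$ and $\widehat P(0)=1$.

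Since $S_0,W_1,\dots,W_k$ are independent and the $W_i$ are identically distributed, we have $\widehat{\mathcal L(S_k)}(\xi)=\E\psi(\xi S_0)\,\bigl(\E\psi(\xi W_1)\bigr)^k$, with $|\E\psi(\xi S_0)|\le1$. Now fix $\xi\neq0$. Then
\[
\E\psi(\xi W_1)=\frac{1}{|B_1||B_2|}\sum_{x\in B_1}\sum_{y\in B_2}\psi\bigl(\xi a\,xy+\xi b\,x^{-1}y^{-1}\bigr).
\]
This is exactly the sum in Theorem~\ref{thm:BKSOB}, with $\alpha=\beta=1$ and parameters $(\xi a,\xi b)\in\F_q\times\F_q^*$; the second parameter is nonzero since $\xi b\neq0$. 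The hypothesis $B_1,B_2\subset\F_q^*$ makes the inverses well defined.

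Theorem~\ref{thm:BKSOB} then gives $|\E\psi(\xi W_1)|\le C_\epsilon p^{-\delta}$ uniformly in $\xi\ne0$, with $C_\epsilon$ its implied constant. The trivial bound $1$ also holds, so $|\E\psi(\xi W_1)|\le\rho$. One technical point: the ``$\ll$'' holds only for $p^n$ large. I would absorb the finitely many small cases into $C_\epsilon$, or read $\rho$ as the paper defines it, namely with $C_\epsilon$ the constant of Theorem~\ref{thm:BKSOB}. Summing $|\widehat{\mathcal L(S_k)}(\xi)|^2\le\rho^{2k}$ over the $q-1$ nonzero $\xi$ gives \eqref{eq:full-chi-square}.

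The two remaining bounds follow from standard inequalities. For total variation, Cauchy--Schwarz gives
\[
2\|P-U_q\|_{\mathrm{TV}}=\sum_z|P(z)-1/q|\le\sqrt q\Bigl(\sum_z(P(z)-1/q)^2\Bigr)^{1/2}=\sqrt{\chi^2(P\Vert U_q)},
\]
which yields \eqref{eq:full-TV}. For entropy, $\log q-H(P)=D(P\Vert U_q)=\sum_zP(z)\log(qP(z))$. Jensen's inequality for the concave $\log$, applied with weights $P(z)$, gives $D\le\log\sum_z qP(z)^2=\log(1+\chi^2(P\Vert U_q))$. Combined with \eqref{eq:full-chi-square}, this gives \eqref{eq:full-entropy}.

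The only substantive input is Theorem~\ref{thm:BKSOB}, and the one point to check is that rescaling by $\xi$ keeps the parameters in the admissible range $\F_q\times\F_q^*$, which it does.
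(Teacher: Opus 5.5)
Your proposal is correct and follows essentially the same route as the paper. You bound each nontrivial Fourier coefficient of an increment via Theorem~\ref{thm:BKSOB} with parameters $(\xi a,\xi b)$, multiply over the independent steps, and convert to $\chi^2$ by Parseval; then you get total variation by Cauchy--Schwarz and entropy via $D\le\log(1+\chi^2)$, which you prove directly by Jensen, whereas the paper cites monotonicity of R\'enyi divergence ($D\le D_2$), the same underlying fact.
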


\begin{remark}
Entropy is monotone along the
walk. Indeed, if $U$ and $V$ are independent random variables on a
finite abelian group, then $H(U+V)\ge H(U)$ and
$H(U+V)\ge H(V)$, since $H(U+V\mid V)=H(U)$ by translation invariance of entropy, while conditioning cannot increase
entropy. Applying this with $S_{k+1}=S_k+W_{k+1}$,
where $S_k$ and $W_{k+1}$ are independent, yields $H(S_{k+1})\ge H(S_k)$. The same argument applied to every nonzero $\F_p$-linear map
$L$ gives $H(L(S_{k+1}))\ge H(L(S_k))$,
and hence, in particular, $H_\Omega(S_{k+1})\ge H_\Omega(S_k)$.
\end{remark}

\begin{remark}
Theorems~\ref{thm:digit-mixing} and
\ref{thm:full-field-mixing} exhibit two distinct mixing scales. Since $\rho\le C_\epsilon p^{-\delta}$,
equations~\eqref{eq:projection-TV} and~\eqref{eq:full-TV} imply
\[
\left\|
\mathcal L(L(S_k))-U_p
\right\|_{\mathrm{TV}}
\le
\frac12 C_\epsilon^k
p^{1/2-k\delta}
\]
for every nonzero $\F_p$-linear map $L$, and
\[
\left\|
\mathcal L(S_k)-U_q
\right\|_{\mathrm{TV}}
\le
\frac12 C_\epsilon^k
p^{n/2-k\delta}.
\]

Consequently, for any fixed $k>1/(2\delta)$,
every nonzero linear projection of $S_k$ is asymptotically uniform on
$\F_p$ as $p\to\infty$ (i.e. the norm becomes of order $o_{p\to\infty}(1)$). In particular, every coordinate with respect
to every basis of $\F_q$ over $\F_p$ becomes nearly uniform after
only $O(1)$ steps. By contrast, convergence of the full $\F_q$-valued distribution
requires $k>n/(2\delta)$,
reflecting the need to control all $q-1$ nontrivial additive
characters simultaneously. This simply highlights the fact that coordinate-wise pseudorandomness
occurs before full joint pseudorandomness, with the latter requiring an
additional factor of $n$.
\end{remark}

\section{Preparations}
First, we recall some basic facts related to the theory of Fourier analysis over finite abelian groups, which we state for the special case of $\F_q$. The reader may consult \cite[Chapter~4]{TaoVu} for more details. Writing $\psi_a(x) = \psi(ax)$, the set $\{\psi_a:a\in \F_q\}$ represents the set of characters of $\F_q$. Then, every function $f:\F_q\rightarrow \C$ may be written as 
\begin{equation}
\label{eqn:frepch}
    f = \sum_{a\in \F_q}\hat{f}(a)\cdot \psi_a,
\end{equation}
where 
\begin{equation*}
    \hat{f}(a) = \E_{x\in\F_q}f(x)\overline{\psi_a(x)} \quad \bigg(\text{writing}\quad \E_{x\in A} g(x) := \frac{1}{|A|}\sum_{x\in A} g(x)\bigg).
\end{equation*}

For $f, g:\F_q\rightarrow \C$, we recall the Parseval identity
\begin{equation}
\label{eqn:Plancherel}
    \E_{x\in \F_q} f(x)\overline{g(x)} = \sum_{a\in \F_q}\hat{f}(a)\overline{\hat{g}(a)}.
\end{equation}
We define the convolution of $f$ and $g$ by
\begin{equation*}
    f*g(x) = \E_{y\in \F_q}f(y)g(x-y)
\end{equation*}
and write $f^{(k)}$ for the $k$-fold convolution of $f$. Then, for any $a\in \F_q$, we have
\begin{equation}
\label{eqn:ConvFT}
    \widehat{f*g}(a) = \hat{f}(a)\cdot\hat{g}(a).
\end{equation}

Applying \eqref{eqn:Plancherel} and \eqref{eqn:ConvFT}, it follows that, for arbitrary $A\subset\F_q$ and $x\in \F_q$, we have
\begin{equation}
    \label{eqn:RepFConv}
    1_A^{(k)}(x) = \frac{1}{q^{k-1}}\bigg|\bigg\{(a_1,\dots,a_k)\in A^k:a_1+\cdots+a_k=x\bigg\}\bigg|,
\end{equation}
where $1_A(x)$ denotes the indicator function of $A$.

We require the following standard pigeonholing argument.
\begin{lemma}
\label{lem:PopPig}
Let $0<\mu < 1$, $X\subset \F_q$, $f$ a function with $f(x)\geq 0$ for all $x\in X$ and suppose
$$
\sum_{x\in X}f(x)\geq K.
$$
Let $Y = \{x\in X: f(x)\geq \mu K/|S|\}$. Then
$$
\sum_{y\in Y}f(y) \geq (1-\mu)K.
$$
If we further assume that $f(x)\leq M$ for all $x\in X$, then $|Y|\geq (1-\mu)K/M.$
\end{lemma}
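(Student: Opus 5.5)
The statement is the standard popularity pigeonhole, with $|S|$ meaning $|X|$. The plan is to split $X$ into the popular set $Y$ and its complement $X\setminus Y$, and to show that the complement contributes little to the sum.

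\textbf{The sum over $Y$.} For $x\in X\setminus Y$ we have $f(x)<\mu K/|X|$. Summing this bound over at most $|X|$ elements gives
\[
\sum_{x\in X\setminus Y}f(x)\le |X\setminus Y|\cdot\frac{\mu K}{|X|}\le \mu K.
\]
Since $f\ge 0$ on $X$, we can write
\[
\sum_{y\in Y}f(y)=\sum_{x\in X}f(x)-\sum_{x\in X\setminus Y}f(x)\ge K-\mu K=(1-\mu)K,
\]
which is the first claim. If $K\le 0$ the claim is trivial because $f\ge0$, so we may assume $K>0$ whenever it matters.

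\textbf{The size of $Y$.} Now assume in addition that $f\le M$ on $X$. Then
\[
(1-\mu)K\le \sum_{y\in Y}f(y)\le M|Y|,
\]
and dividing by $M>0$ gives $|Y|\ge (1-\mu)K/M$. The case $M\le 0$ is degenerate: it forces $f\equiv 0$ and hence $K\le 0$, and we exclude it.

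\textbf{Main point of care.} There is no real obstacle here. The only things to watch are the interpretation of $|S|$ as $|X|$ (if $S$ is some other set with $|S|\ge|X|$, the same argument works verbatim since the threshold only gets smaller) and the degenerate sign cases handled above.
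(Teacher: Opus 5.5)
Your argument is correct and is the standard popularity pigeonhole that the paper invokes without giving a proof. Reading $|S|$ as $|X|$ is right, as the application in the proof of Theorem~\ref{thm:BKSOB} confirms: there $X=B_2^{2k}$, $\mu=1/2$, and the threshold becomes $\tfrac12 p^{-4\delta k}q^{-\tau}|B_1|^2$, a set in $\F_q^{2k}$, which your argument covers since it never uses that $X\subset\F_q$.
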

The following result appears in \cite[Page 664]{BouGar2}. Also see \cite[Lemmas 2 and 3]{Bak}.
\begin{lemma}
\label{lem:recSol}
Let $X, Y\subset \F_q$. Then
\begin{align*}
   \bigg|\bigg\{(x_1,\dots,x_{2n}, y)&\in X^{2n}\times Y: \frac{1}{y+x_1}+\cdots +\frac{1}{y+x_n} = \frac{1}{y+x_{n+1}}+\cdots +\frac{1}{y+x_{2n}}\bigg\}\bigg|
   \\ &\ll |X|^n|Y| + |X|^{2n}.
\end{align*}
\end{lemma}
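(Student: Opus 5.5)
The plan is to fix the tuple $(x_1,\dots,x_{2n})\in X^{2n}$ and count the admissible $y\in Y$, treating the equation as an identity between rational functions of the single variable $y$. Write
\[
R(y)=\sum_{i=1}^{n}\frac{1}{y+x_i}-\sum_{i=n+1}^{2n}\frac{1}{y+x_i}.
\]
After cancelling equal terms, $R(y)=\sum_j c_j/(y+a_j)$, where the $a_j$ are the distinct values among the $x_i$. Each $c_j$ is the integer (multiplicity of $a_j$ among $x_1,\dots,x_n$) minus (multiplicity among $x_{n+1},\dots,x_{2n}$), so $|c_j|\le n$. I would split the tuples into two classes according to whether every $c_j$ vanishes in $\F_p$.

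\emph{Degenerate tuples.} Here every $c_j$ vanishes in $\F_p$. Since $|c_j|\le n$ and $n$ is regarded as fixed, I will assume $p>n$; I flag this as the one point needing care, since for $p\le n$ a $c_j$ could vanish mod $p$ without the multisets agreeing. Under this assumption, degeneracy means the multisets $\{x_1,\dots,x_n\}$ and $\{x_{n+1},\dots,x_{2n}\}$ coincide. Then $(x_{n+1},\dots,x_{2n})$ is a permutation of $(x_1,\dots,x_n)$, so there are at most $n!\,|X|^n$ such tuples. Every $y\in Y$ may be a solution, so these tuples contribute at most $n!\,|X|^n|Y|$.

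\emph{Nondegenerate tuples.} Some $c_{j_0}\neq 0$ in $\F_p$, so $R$ has a genuine pole at $-a_{j_0}$ and is not identically zero as a rational function. Multiply through by $\prod_j (y+a_j)$. For $y$ avoiding the at most $2n$ points $-a_j$, $R(y)=0$ is equivalent to $P(y)=0$, where
\[
P(y)=\sum_j c_j\prod_{l\neq j}(y+a_l)
\]
has degree at most $2n-1$. Evaluating at $y=-a_{j_0}$ gives $P(-a_{j_0})=c_{j_0}\prod_{l\ne j_0}(a_l-a_{j_0})\neq0$, so $P$ is a nonzero polynomial and has at most $2n-1$ roots. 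Adding the at most $2n$ excluded pole values (counted however the convention for $1/0$ treats them), each nondegenerate tuple admits at most $4n$ values of $y$. These tuples therefore contribute at most $4n|X|^{2n}$.

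Summing the two classes gives the bound $\ll_n |X|^n|Y|+|X|^{2n}$. The only real obstacle is the nonvanishing of $R$ (equivalently of $P$) in the nondegenerate case. The partial-fraction/pole argument above handles it, and the only subtlety is the characteristic condition $p>n$ needed to pass from ``the multisets differ'' to ``some $c_j\neq0$ in $\F_p$''.
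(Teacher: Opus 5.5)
Your argument is correct, and it is the standard argument behind the sources the paper cites for this lemma (Bourgain--Garaev and Baker); the paper gives no proof of its own. The argument is: for fixed $(x_1,\dots,x_{2n})$, the rational function in $y$ either vanishes identically, which forces a degenerate configuration of the $x_i$, or it has $O(n)$ zeros. The restriction $p>n$ that you flag can be dropped. In the degenerate case no value can occur exactly once among $x_1,\dots,x_{2n}$, since it would give $c_j=\pm1\neq0$ in $\F_p$. Hence at most $n$ distinct values occur, and there are at most $(2n)^{2n}|X|^n=O_n(|X|^n)$ degenerate tuples in every characteristic.
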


Given sets $X, Y\subset\F_q$, we define the multiplicative energy between $X$ and $Y$ by
$$
E_{\times}(X, Y) = |\{(x_1, x_2, y_1, y_2)\in X^2\times Y^2:x_1 y_1 = x_2 y_2\}|,
$$
and write $E_{\times}(X) = E_{\times}(X, X)$. We also recall the well-known inequality (see \cite[Corollary~2.10]{TaoVu})
\begin{equation}
    \label{eqn:ECS}
    E_{\times}(X, Y) \leq E_{\times}(X)^{1/2}E_{\times}(Y)^{1/2}.
\end{equation}
We require the following bound on the multiplicative energy of boxes, proved in \cite[Lemma~1]{Kon}.
\begin{lemma}
\label{lem:KonMEB}
Given $H\leq p^{1/2}$ and an arbitrary $\bm{N}$ of the form \eqref{eqn:Ndef}, let $B = B(\bm{N}, H)$ denote the set defined by \eqref{def:Box}. Then $E_{\times}(B) \ll_n \log{p}\cdot|B|^2$.
\end{lemma}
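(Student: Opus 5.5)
The plan is to reduce to boxes of side length at most $p^{1/2}$ and then use a classical box-counting argument, which avoids any hard Fourier analysis.

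\textbf{Step 1: reduction to small side length.} Write $B = B(\bm N, H)$. If $H > p^{1/2}$, I partition each coordinate interval $[N_i+1, N_i+H]$ into $O(H/H')$ subintervals of length at most $H' = \lfloor p^{1/2}\rfloor$. This expresses $B$ as a disjoint union of $m \ll_n (H/H')^n$ sub-boxes $B_1,\dots,B_m$. Each $B_j$ is of the form \eqref{def:Box} with side length at most $p^{1/2}$, so it satisfies the hypothesis of the lemma.

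\textbf{Step 2: bound the energy.} Since $E_\times(\cdot,\cdot)$ is additive in each argument,
$$
E_\times(B)=\sum_{j,k} E_\times(B_j,B_k)\le \sum_{j,k}E_\times(B_j)^{1/2}E_\times(B_k)^{1/2},
$$
where the inequality is \eqref{eqn:ECS}. That is only helpful if each $E_\times(B_j)\ll \log p\,|B_j|^2$, which is the case $H\le p^{1/2}$. Therefore the statement itself is the base case, and it must be proved directly; the reduction above would only serve to extend it.

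\textbf{Step 3: the direct count for $H\le p^{1/2}$.} It suffices to bound $J(\lambda)=|\{(x,y)\in B^2: x=\lambda y\}|$, because
$$
E_\times(B)=\sum_{\lambda\in\F_q^*}J(\lambda)^2+O(|B|^2),
$$
where the error term accounts for the zero element. Fix $\lambda$ and let $M_\lambda$ be the $\F_p$-linear map $y\mapsto \lambda y$ written in the basis $\Omega$. Translate the box to $B=c+\{\sum u_i\omega_i : 1\le u_i\le H\}$. Differences of solutions then satisfy $u=M_\lambda v$ with $u,v\in[-H,H]^n$. So $J(\lambda)$ is at most the number of points of the lattice
$$
\Gamma_\lambda=\{(u,v)\in\Z^{2n}: u\equiv M_\lambda v \pmod p\}
$$
lying in the cube $[-H,H]^{2n}$. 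The lattice $\Gamma_\lambda$ has determinant $p^n$.

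\textbf{Step 4: successive minima.} Let $\lambda_1\le\dots\le\lambda_{2n}$ be the successive minima of $\Gamma_\lambda$ with respect to the cube $[-1,1]^{2n}$. The standard lattice point count gives
$$
|\Gamma_\lambda\cap[-H,H]^{2n}|\ll_n\prod_i \max(1,H/\lambda_i),
$$
and Minkowski's theorem gives $\prod_i\lambda_i\asymp_n p^n$.

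\textbf{Step 5: sum over $\lambda$.} Here I must show that $\sum_\lambda J(\lambda)^2\ll_n\log p\,|B|^2=\log p\,H^{2n}$. The approach is a dyadic decomposition according to the profile of successive minima. For a given profile, the number of $\lambda$ producing a short vector $(u,v)$ is controlled by counting $\lambda=u/v$ with $u,v$ in small boxes. The constraint $H\le p^{1/2}$ ensures that $H^2\le p$, so that distinct short-vector pairs determine distinct $\lambda$ except for trivial coincidences.

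The main obstacle is this final summation: getting only a single $\log p$ rather than a power. I would try to handle it by noting that $\sum_\lambda J(\lambda)^2$ counts quadruples $x_1y_2=x_2y_1$, and bounding it by the number of solutions to $x_1y_2\equiv x_2y_1$ whose coordinates are small. Then I would apply a divisor-type bound in $\F_p[\omega]$ coordinates, via reduction to the prime-field energy bound $E_\times(I)\ll|I|^2\log p$ for intervals of length at most $p^{1/2}$. Since this lemma is quoted from \cite[Lemma~1]{Kon}, in the paper I would simply cite it.
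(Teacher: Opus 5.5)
The paper does not prove this lemma; it simply quotes it from \cite[Lemma~1]{Kon}. Your fallback of citing it therefore matches what the paper does. As a self-contained argument, however, your proposal has a genuine gap, exactly at Step 5.

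Steps 1--2 are vacuous, since the hypothesis is already $H\le p^{1/2}$. Steps 3--4 are a correct reduction: $E_\times(B)=\sum_\lambda J(\lambda)^2+O(|B|^2)$, and $J(\lambda)\ll_n\prod_i\max(1,H/\lambda_i)$ for the lattice $\Gamma_\lambda$ of determinant $p^n$. What remains is to show $\#\{\lambda:J(\lambda)\ge T\}\ll_n H^{2n}/T^2$ for every dyadic $T$. The natural count parametrises $\lambda=u/v$ by a shortest vector $(u,v)$. Since $J\ge T$ only forces $\lambda_1\ll_n HT^{-1/r}$, where $r=\#\{i:\lambda_i\le H\}$, this gives $\#\{\lambda:J(\lambda)\ge T\}\ll_n H^{2n}T^{-2n/r}$. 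That suffices when $r\le n$, but loses a power of $T$ when $r>n$. So the obstacle is not a single $\log p$ versus a power of $\log p$, as you suggest; it is polynomial. Controlling the $\lambda$ whose short vectors span more than $n$ dimensions is the real content of the lemma, and your Step 5 does not address it.

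The heuristics you offer for Step 5 fail for $n\ge2$. The claim that $H^2\le p$ makes distinct short pairs give distinct $\lambda$ is the $n=1$ lifting argument. For $n\ge2$, two pairs give the same $\lambda$ iff $uv'=u'v$ in $\F_q$. The $\Omega$-coordinates of $uv'$ are $\sum_{i,j}u_iv'_jc_{ijk}$, where the structure constants $c_{ijk}\in\F_p$ of the basis are arbitrary, so small coordinates do not let you lift anything to $\Z$.

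Here is a concrete example with $r>n$. Take $p=a^2+b^2$ with $p\equiv5\pmod 8$, and $c\equiv a/b\pmod p$. Then $c^2\equiv-1$, so $c$ is a non-residue. Use the basis $\{1,\sqrt c\}$ of $\F_{p^2}$ and $\lambda=\sqrt c$. The pairs $(u,v)=(\sqrt c,1)$, $(a,b\sqrt c)$ and $(-b,a\sqrt c)$ all satisfy $u=\lambda v$. They give three linearly independent vectors of $\Gamma_\lambda$, all of sup-norm at most $\lfloor p^{1/2}\rfloor$, so $r\ge3>n$.

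Your other suggestions fail for similar reasons. There is no reduction to the prime-field interval bound $E_\times(I)\ll|I|^2\log p$, because multiplication in $\F_q$ is not coordinatewise: $x_1y_2=x_2y_1$ does not split into equations over intervals in $\F_p$. There is also no ``divisor-type bound in $\F_p[\omega]$'', since that ring is the field $\F_q$. A divisor bound requires lifting to characteristic zero with small structure constants, which an arbitrary basis $\Omega$ does not provide.

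So either cite \cite{Kon}, as the paper does, or supply a genuine argument for the regime $r>n$.
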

We also need the following result on the sub-additivity of the multiplicative energy (see for example \cite[Lemma~8]{KonShk}).
\begin{lemma}
\label{lem:ESubadd}
Given sets $X_1, \dots, X_k \subseteq \F_q$, we have
\begin{equation*}
\label{eqn:Esubadd1}
    E_{\times}\bigg(\bigcup_{i=1}^{k}X_i\bigg) \leq \bigg(\sum_{i=1}^k E_{\times}(X_i)^{1/4}\bigg)^4.
\end{equation*}
\end{lemma}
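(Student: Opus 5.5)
The plan is to view multiplicative energy as a quartic form that is monotone in nonnegative weights. We then show this form satisfies a Gowers--Cauchy--Schwarz type inequality and expand the union multilinearly. For nonnegative functions $f_1,f_2,f_3,f_4$ on $\F_q$ define
$$
E(f_1,f_2,f_3,f_4)=\sum_{x_1y_1=x_2y_2} f_1(x_1)f_2(x_2)f_3(y_1)f_4(y_2),
$$
so that $E_\times(X)=E(1_X,1_X,1_X,1_X)$. Since all terms are nonnegative, $E$ is monotone in each argument. With $X=\bigcup_i X_i$ we have $1_X\le \sum_i 1_{X_i}$ pointwise. Expanding by multilinearity gives
$$
E_\times(X)\le \sum_{i,j,k,l} E(1_{X_i},1_{X_j},1_{X_k},1_{X_l}).
$$
It therefore suffices to prove
$$
E(f_1,f_2,f_3,f_4)\le \prod_{m=1}^4 E(f_m,f_m,f_m,f_m)^{1/4},
$$
because summing $E_\times(X_i)^{1/4}E_\times(X_j)^{1/4}E_\times(X_k)^{1/4}E_\times(X_l)^{1/4}$ over all $i,j,k,l$ gives exactly $\bigl(\sum_i E_\times(X_i)^{1/4}\bigr)^4$.

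For the key inequality, first restrict to supports in $\F_q^*$, where the equation $x_1y_1=x_2y_2$ is equivalent to $x_1/x_2=y_2/y_1$. Put $R_{f,g}(\lambda)=\sum_{u=\lambda v}f(u)g(v)$. Then $E(f_1,f_2,f_3,f_4)=\sum_\lambda R_{f_1,f_2}(\lambda)R_{f_4,f_3}(\lambda)$, and Cauchy--Schwarz bounds this by $\|R_{f_1,f_2}\|_2\|R_{f_4,f_3}\|_2$. Next, $\|R_{f_1,f_2}\|_2^2$ counts $u/v=u'/v'$ with $u,u'$ weighted by $f_1$ and $v,v'$ weighted by $f_2$. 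Rewriting this as $u/u'=v/v'$ gives $\sum_\mu R_{f_1,f_1}(\mu)R_{f_2,f_2}(\mu)$. A second Cauchy--Schwarz bounds this by $\|R_{f_1,f_1}\|_2\|R_{f_2,f_2}\|_2=E(f_1)^{1/2}E(f_2)^{1/2}$, where $E(f)$ abbreviates $E(f,f,f,f)$. Combining the two steps gives the desired $\prod_m E(f_m)^{1/4}$.

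The only delicate point is the element $0$, where division is not allowed. I expect to handle it by splitting each $f_m=f_m1_{\F_q^*}+f_m(0)1_{\{0\}}$. In the equation $x_1y_1=x_2y_2$, if exactly one side vanishes there are no solutions. So the solutions decompose into those in $(\F_q^*)^4$ and those with both products zero. On the zero part the form factorizes as $Z(f_1,f_3)Z(f_2,f_4)$, where $Z(f,g)=\sum_{xy=0}f(x)g(y)$. Cauchy--Schwarz across the two pieces then recovers the same Hölder-type bound. This works because each diagonal energy $E(f_m)$ is itself the sum of its nonzero part and the square $Z(f_m,f_m)^2$. Hence $(a_1a_2a_3a_4)^{1/4}+(b_1b_2b_3b_4)^{1/4}\le\prod_m(a_m+b_m)^{1/4}$, which is Hölder's inequality. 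This bookkeeping is the main, though routine, obstacle; the rest is the two applications of Cauchy--Schwarz above.
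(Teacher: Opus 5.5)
Your argument on $\F_q^*$ is correct: the multilinear expansion of $1_X\le\sum_i 1_{X_i}$, together with your two Cauchy--Schwarz steps, proves $E(f_1,f_2,f_3,f_4)\le\prod_m E(f_m)^{1/4}$ for functions supported on $\F_q^*$. That gives the lemma whenever every $X_i\subseteq\F_q^*$. (The paper gives no proof of its own, only the citation to \cite{KonShk}.)

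\textbf{The gap.} It is the step at $0$ that you describe as routine bookkeeping. To bound the zero part you need $Z(f_1,f_3)Z(f_2,f_4)\le\prod_m Z(f_m,f_m)^{1/2}$, which is a Cauchy--Schwarz inequality for $Z$. But
$Z(f,g)=f(0)g(0)+f(0)\sum_{y\ne0}g(y)+g(0)\sum_{x\ne0}f(x)$
is an indefinite bilinear form in the pair $\bigl(f(0),\sum_{x\ne0}f(x)\bigr)$. For example, with $f=1_{\{0\}}$ and $g=1_Y$, $0\notin Y$, one has $Z(f,g)=|Y|$ but $Z(g,g)=0$. Your four-function inequality also fails in this case. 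Indeed $E(1_{\{0\}},1_{\{0\}},1_Y,1_Y)=|Y|^2$, since $x_1=x_2=0$ makes the equation hold for all $y_1,y_2\in Y$. Meanwhile $\prod_m E(f_m)^{1/4}=E_\times(Y)^{1/2}\le|Y|^{3/2}$, which is smaller once $|Y|\ge2$.

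\textbf{The statement itself fails.} No bookkeeping can repair this, because the lemma as stated is false once $0\in\bigcup_i X_i$. For $0\notin Y$, exactly $2|Y|+1$ pairs in $(Y\cup\{0\})^2$ have product $0$, so $E_\times(Y\cup\{0\})=E_\times(Y)+(2|Y|+1)^2$. Take $X_1=\{0\}$ and $X_2=Y=\{2,3,5,7,11,13\}\subset\F_p$ with $p>169$. Unique factorization gives $E_\times(Y)=2\cdot36-6=66$. The left-hand side is then $66+13^2=235$, whereas the right-hand side is $(1+66^{1/4})^4\approx219.8$.

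\textbf{What you have proved.} You have proved the correct version of the lemma, for $X_i\subseteq\F_q^*$, and that is how it should be stated. This version still suffices for the paper's use in Lemma~\ref{lem:wrapped_box_energy}, where pieces (such as boxes centred at the origin) may contain $0$. Apply the lemma to the sets $B_j\setminus\{0\}$, then add the zero contribution $(2|B|-1)^2\le4|B|^2$ separately. This does not affect the bound $\ll_n\log p\cdot|B|^2$.
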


\begin{lemma}
\label{lem:wrapped_box_energy}
Let $H\le p^{1/2}$ and, for $1\le i\le n$, let $I_i\subseteq \F_p$ be an interval of length $H$, where the interval is allowed to wrap around modulo $p$. Define
\[
B=
\left\{
\sum_{i=1}^n x_i\omega_i:
x_i\in I_i,\ 1\le i\le n
\right\}.
\]
Then $E_\times(B)\ll_n \log p\cdot|B|^2$.
\end{lemma}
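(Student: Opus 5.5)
The approach is to reduce the wrapped box to at most $2^n$ ordinary boxes of the form \eqref{def:Box} and then combine Lemma~\ref{lem:KonMEB} with the subadditivity in Lemma~\ref{lem:ESubadd}.

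\textbf{Splitting the intervals.} Each wrapped interval $I_i\subseteq\F_p$ of length $H$ is either a genuine interval $[N_i+1,N_i+H]$ with $0\le N_i<N_i+H\le p$, or it wraps around modulo $p$. In the second case we split it as a disjoint union $I_i=I_i'\cup I_i''$, where $I_i'=[N_i+1,p-1]$ and $I_i''=[0,H-(p-1-N_i)-1]$, both honest intervals with lengths summing to $H$. Taking products over coordinates writes $B$ as a disjoint union of at most $2^n$ sets
\[
B_\sigma=\Bigl\{\sum_{i=1}^n x_i\omega_i: x_i\in J_{i,\sigma}\Bigr\},
\]
where each $J_{i,\sigma}$ is an honest interval of length $H_{i,\sigma}\le H\le p^{1/2}$.

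\textbf{Handling unequal side lengths.} The pieces $B_\sigma$ are boxes with possibly unequal side lengths, whereas Lemma~\ref{lem:KonMEB} is stated for equal sides $H$. There are two options.
\begin{enumerate}[(a)]
\item Observe that Konyagin's proof in \cite{Kon} works verbatim for boxes with sides $H_i\le p^{1/2}$, giving $E_\times(B_\sigma)\ll_n \log p\,|B_\sigma|^2$.
\item Avoid this by monotonicity. Each $B_\sigma$ is contained in a full box $\tilde B_\sigma=B(\bm N_\sigma,H)$ of side $H$: extend each $J_{i,\sigma}$ to an interval of length $H$ inside $[0,p-1]$, which is possible since $H\le p$. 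Since $E_\times$ is monotone under inclusion,
\[
E_\times(B_\sigma)\le E_\times(\tilde B_\sigma)\ll_n \log p\cdot H^{2n}=\log p\cdot|B|^2.
\]
\end{enumerate}
Option (b) is the cleaner one and is all we need, because the target bound is in terms of $|B|^2=H^{2n}$, not $|B_\sigma|^2$.

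\textbf{Combining the pieces.} Lemma~\ref{lem:ESubadd} now gives
\[
E_\times(B)\le\Bigl(\sum_\sigma E_\times(B_\sigma)^{1/4}\Bigr)^4\le (2^n)^4\max_\sigma E_\times(B_\sigma)\ll_n \log p\cdot|B|^2,
\]
where the factor $2^{4n}$ is absorbed into the implied constant depending on $n$.

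\textbf{Main obstacle.} The only subtle point is making sure each extension $\tilde B_\sigma$ is a legitimate box in the sense of \eqref{eqn:Ndef}. For a piece such as $[0,m]$ we take $[0,H-1]$, and for $[N+1,p-1]$ we take $[p-H,p-1]$; in both cases $0\le N_i<N_i+H\le p$ holds. One also checks that the representation in Lemma~\ref{lem:KonMEB} uses coordinates $x_i\in[N_i+1,N_i+H]$ with $0\le x_i<p$. With these checks the argument is routine.
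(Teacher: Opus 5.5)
Your proof is correct and follows the paper's route: split each wrapped $I_i$ into two ordinary intervals, bound the energy of the at most $2^n$ resulting pieces with Lemma~\ref{lem:KonMEB}, and recombine with Lemma~\ref{lem:ESubadd}. Your monotonicity step $E_\times(B_\sigma)\le E_\times(\tilde B_\sigma)$ is actually the more careful treatment, because the paper applies Lemma~\ref{lem:KonMEB} directly to pieces with unequal side lengths, which that lemma (stated only for $B(\bm N,H)$) does not literally cover.

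There is one slip, in your boundary check. Condition \eqref{eqn:Ndef} forces $N_i\ge 0$, so the residue $0$ is represented by $p$, and $[0,H-1]$ would need $N_i=-1$. The fix is to split between $p$ and $1$ instead, writing the interval as $[1,a_i]\cup[p-b_i+1,p]$ as the paper does, and to extend the two pieces to $[1,H]$ and $[p-H+1,p]$.
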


\begin{proof}
For each $i$, if $I_i$ does not wrap around modulo $p$, then it is an interval of the form $[N_i+1,N_i+H]$ for some $0\le N_i<N_i+H\le p$. Otherwise,
\[
I_i=[1,a_i]\cup[p-b_i+1,p],
\]
where $a_i+b_i=H$. Thus every wrapped interval is the disjoint union of two ordinary intervals. Consequently, $B$ can be written as the disjoint union of at most $2^n$ boxes
\[
B=\bigcup_{j=1}^{m}B_j,
\qquad m\le 2^n,
\]
where each $B_j$ is a box of the form \eqref{def:Box}. By Lemma~\ref{lem:KonMEB}, $E_\times(B_j)\ll_n \log p\cdot|B_j|^2$ for every $j$. Applying Lemma~\ref{lem:ESubadd} and H\"older's inequality,
\[
E_\times(B)
\le
\left(
\sum_{j=1}^{m}
E_\times(B_j)^{1/4}
\right)^4
\ll_n
\log p\cdot
\left(
\sum_{j=1}^{m}
|B_j|^{1/2}
\right)^4 \ll_n \log p\cdot |B|^2,
\]
as required.
\end{proof}

\section{Proofs of Theorem~\ref{thm:BKSOB} and Corollary~\ref{thm:kfssrb}}
We follow a similar scheme as \cite{Bou} to break down the proofs into a number of auxiliary results as follows.
\begin{lemma}
\label{lem:LSpecUB}
For $H\leq p^{1/2}$ and arbitrary $\bm{N}$ of the form \eqref{eqn:Ndef}, let $B = B(\bm{N}, H)$ be the set given by \eqref{def:Box}. Then, given $\epsilon >0$, writing 
$$
\Omega =  \left\{\xi\in \F_q:\left|\sum_{x\in B}\psi\left(\xi/ x\right)\right|>p^{-\epsilon}|B|\right\},
$$
we have
\begin{equation*}
    |\Omega| \lesssim_n q^{1+\delta} |B|^{-2}.
\end{equation*}
for some $\delta = \delta(\epsilon)>0$, with $\delta\approx \epsilon^{1/2}$.
\end{lemma}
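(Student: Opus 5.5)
We follow Bourgain's large-spectrum argument from \cite{Bou}, using Lemma~\ref{lem:recSol} in place of the corresponding $\F_p$ estimate and Lemma~\ref{lem:wrapped_box_energy} in place of the energy bound for intervals. The idea is to exploit the approximate translation invariance of $B$ in order to introduce a shift variable, and then control high moments of the resulting sums.

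\emph{Step 1: shifting by a small box.} Fix a small parameter $\tau>0$. Let $B'=B(\bm{0},H')$ with $H'=p^{-\tau}H$, so that $|B'|=p^{-n\tau}|B|$. For $u\in B'$, the translate $B+u$ differs from $B$ in at most $O(n p^{-\tau}|B|)$ elements. Hence, for $\xi\in\Omega$ and $\tau$ small compared with $\epsilon$,
\[
\Big|\sum_{x\in B}\psi(\xi/(x+u))\Big|>\tfrac12 p^{-\epsilon}|B| \qquad\text{for all } u\in B'.
\]
More flexibly, we average over $u,v\in B'$ and use $x\mapsto x+uv$. Since $B'B'$ has coordinates of size at most about $H^2p^{-2\tau}\le p^{1-2\tau}$, the set $B+B'B'$ is a box with wrapping allowed. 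Its energy is controlled by Lemma~\ref{lem:wrapped_box_energy}, or it can be split into boxes of side at most $p^{1/2}$.

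\emph{Step 2: the $2k$-th moment.} We write $1/(x+uv)=u^{-1}\cdot 1/(u^{-1}x+v)$. Summing over $\xi\in\Omega$, over $x\in B$ and over $u,v\in B'$, and applying H\"older's inequality in the $(\xi,x,u)$ variables, we obtain
\[
|\Omega|\,p^{-2k\epsilon}|B|^{2k}|B'|^{2k}\ll |B|^{2k-1}|B'|^{2k-1}\sum_{\xi}\sum_{x,u}\Big|\sum_{v\in B'}\psi\big(\xi u^{-1}(y+v)^{-1}\big)\Big|^{2k},
\qquad y=u^{-1}x.
\]
The pairs $(\xi u^{-1},u^{-1}x)$ are counted with multiplicity. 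We group them by $z=\xi u^{-1}$ and $y=u^{-1}x$. The number of representations of a pair $(z,y)$ is bounded using the multiplicative energy: by Cauchy--Schwarz, this step costs a factor $E_\times(B,B')^{1/2}$, which Lemma~\ref{lem:wrapped_box_energy} and \eqref{eqn:ECS} bound by $\lesssim_n(|B||B'|)^{1}$ up to logarithms. We then extend the sum over $z$ to all of $\F_q$ and apply orthogonality in $z$. This turns the $2k$-th moment into $q$ times the number of solutions of
\[
\frac{1}{y+v_1}+\cdots+\frac{1}{y+v_k}=\frac{1}{y+v_{k+1}}+\cdots+\frac{1}{y+v_{2k}},
\]
which by Lemma~\ref{lem:recSol} is $\ll |B'|^k|Y|+|B'|^{2k}$. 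Here $Y$ is the set of relevant $y$, so $|Y|\le|B|^2/\min$-type bounds apply.

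\emph{Step 3: optimisation.} Collecting the factors gives
\[
|\Omega|\lesssim_n q\,|B|^{-2}\,p^{O(k\epsilon)+O(n\tau)}\big(1+|B||B'|^{-k}\big)^{1/2}.
\]
We choose $k\approx n/\tau$ so that $|B'|^k\ge|B|$, which removes the second term. The total loss is then $p^{O(k\epsilon+n\tau)}=p^{O(\epsilon/\tau+\tau)}$. Optimising with $\tau\approx\epsilon^{1/2}$ gives $\delta\approx\epsilon^{1/2}$, with $n$-dependence absorbed into the implied constant and $q^{\delta}\ge p^{\delta}$.

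\emph{Main obstacle.} The delicate point is Step 2: passing from the sum over $(\xi,x,u)$ to the free variables $(z,y)$. This needs precise control of the multiplicity through the multiplicative energy of $B$ (and of $B'$, a box of side at most $p^{1/2}$). It also requires checking that the dilated and shifted sets remain wrapped boxes, so that Lemma~\ref{lem:wrapped_box_energy} applies. I would first try to carry out the Cauchy--Schwarz step exactly as in \cite{Bou}, bounding the representation function in $L^2$ by $E_\times(B,B')$.
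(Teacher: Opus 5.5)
There is a genuine gap, and it is in Step~1 rather than Step~2. Shifting by $uv$ with $u$ and $v$ \emph{both} in the box $B'$ destroys the approximate translation invariance that the whole argument rests on.

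Multiplication in $\F_q$ does not act coordinatewise. If $\omega_i\omega_j=\sum_k c_{ijk}\omega_k$, then the $k$-th coordinate of $uv$ is $\sum_{i,j}c_{ijk}u_iv_j$. For $n\ge 2$ and an arbitrary basis, the structure constants $c_{ijk}\in\F_p$ are not small, so these coordinates need not be small either. Even for $n=1$, $uv$ ranges up to $p^{-2\tau}H^2$, whereas a harmless shift must be $O(p^{-\tau}H)$; the two are compatible only when $H\le p^{\tau}$.

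So $B+uv$ may be essentially disjoint from $B$. For $\xi\in\Omega$ there is then no reason to have $|\sum_{x\in B}\psi(\xi/(x+uv))|\gg p^{-\epsilon}|B|$, and the left-hand side of your H\"older inequality in Step~2 is unjustified. Observing that $B+B'B'$ is a wrapped box does not help: that concerns energy, not preservation of the large sum. Separately, absorbing the error $O(np^{-\tau}|B|)$ into $p^{-\epsilon}|B|$ requires $\tau>\epsilon$, not $\tau$ small compared with $\epsilon$.

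The missing idea is to write the shift as a prime-field scalar times a box element. The paper takes $b\in I=(0,p^{\tau})$, a set of integers in $\F_p$, and $a$ in a box $B_*$ of side $p^{-2\tau}H$ with small coordinates.
\begin{itemize}
\item \emph{Small shift.} Since $\F_p$-scalars act coordinatewise, $ab$ lies in a box of side $p^{-\tau}H$. Hence $|B\triangle(B+ab)|<2np^{-\tau}|B|$, and the lower bound survives.
\item \emph{Multiplicity.} Write $\xi/(x+ab)=a^{-1}\xi/(a^{-1}x+b)$, and let $r(\lambda_1,\lambda_2)$ count representations of $(\lambda_1,\lambda_2)=(\xi/a,x/a)$. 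Then $\sum r=|\Omega||B||B_*|$ and $\sum r^2\le|\Omega|E_\times(B,B_*)\ll_n\log p\,|\Omega||B||B_*|$, by \eqref{eqn:ECS} and Lemma~\ref{lem:KonMEB}. Ordinary boxes suffice here; Lemma~\ref{lem:wrapped_box_energy} is not needed.
\item \emph{Moment bound.} The inner sum runs over $b\in I$. After extending $(\lambda_1,\lambda_2)$ to all of $\F_q^2$, Lemma~\ref{lem:recSol} with $X=I$, $Y=\F_q$ bounds the $2k$-th moment by $q(qp^{\tau k}+p^{2\tau k})\ll qp^{2\tau k}$ once $k>n/\tau$.
\end{itemize}
This condition on $k$ depends only on $n$ and $\tau$, because $|I|=p^{\tau}$ does not depend on $H$. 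With your inner variable $v\in B'$, the required $k$ would depend on $H$ through $|B'|=(p^{-\tau}H)^n$.

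The double H\"older step, $\sum r|S|\le(\sum r)^{1-1/k}(\sum r^2)^{1/(2k)}(\sum|S|^{2k})^{1/(2k)}$, then gives $|\Omega|\lesssim_n q\,p^{2\epsilon k+2n\tau}|B|^{-2}$. Choosing $\tau=\epsilon^{1/2}$ and $k=2n\lceil\tau^{-1}\rceil$ yields $\delta\approx\epsilon^{1/2}$.
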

\begin{proof}
Let $\tau>0$ be arbitrary, $a\in I = (0, p^{\tau})$, $b\in B_* = B(\bm{N}, p^{-2\tau}H)$ and note that
\begin{align*}
    \bigg| \sum_{x\in B}\psi\bigg(\frac{\xi}{x}\bigg) - \sum_{x\in B}\psi\bigg(\frac{\xi}{x+ab}\bigg)\bigg|  &\leq |B\setminus (B+ab)| + |(B+ab)\setminus B| \\ &< 2np^{-\tau}H^n.
\end{align*}
Assuming that $\tau>\epsilon$, we deduce
\begin{align}
\label{eqn:trsrep}
\nonumber   |\Omega||B||B_*||I|p^{-\epsilon}&\ll_n \sum_{\xi\in \Omega}\bigg|\sum_{x\in B}\sum_{a\in B_*}\sum_{b\in I}\psi\bigg(\frac{\xi}{x+ab}\bigg)\bigg|
\nonumber   \\ &\leq \sum_{\xi\in \Omega}\sum_{x\in B}\sum_{a\in B_*}\bigg|\sum_{b\in I}\psi\bigg(\frac{a^{-1}\xi}{a^{-1}x+b}\bigg)\bigg| \\
\nonumber   &= \sum_{\lambda_1, \lambda_2\in \F_q}r(\lambda_1, \lambda_2)\bigg|\sum_{b\in I}\psi\bigg(\frac{\lambda_1}{\lambda_2 + b}\bigg)\bigg|,
\end{align}
where
\begin{equation*}
    r(\lambda_1, \lambda_2) = \bigg|\bigg\{(x, a, \xi)\in B\times B_* \times \Omega: \bigg(\frac{\xi}{a}, \frac{x}{a}\bigg) = (\lambda_1, \lambda_2)\bigg\}\bigg|.
\end{equation*}
Note that
\begin{equation}
\label{eqn:rep1stmmnt}
    \sum_{\lambda_1, \lambda_2}r(\lambda_1, \lambda_2) = |\Omega||B||B_*|.
\end{equation}
We also have
\begin{equation*}
    \sum_{\lambda_1, \lambda_2}r(\lambda_1, \lambda_2)^2 = \bigg|\bigg\{(x_1, x_2, a_1, a_2, \xi_1, \xi_2)\in B^2\times B_*^2\times \Omega^2: \bigg(\frac{\xi_1}{a_1},  \frac{x_1}{a_1}\bigg) =\bigg(\frac{\xi_2}{a_2},\frac{x_2}{a_2}\bigg) \bigg\}\bigg|
\end{equation*}
such that, by \eqref{eqn:ECS} and Lemma~\ref{lem:KonMEB},
\begin{align}
\label{eqn:rep2ndmmnt}
    \sum_{\lambda_1, \lambda_2}r(\lambda_1, \lambda_2)^2 &\ll |\Omega|E_{\times}(B, B_*)\\ \nonumber
    &\leq |\Omega|E_{\times}(B)^{1/2}E_{\times}(B_*)^{1/2} \\ \nonumber
   &\ll_n \log{p} \cdot |\Omega||B||B_*|.
\end{align}
By a double application of H\"older's inequality, then using \eqref{eqn:rep1stmmnt} and \eqref{eqn:rep2ndmmnt}, we bound \eqref{eqn:trsrep} by
\begin{align*}
        &\leq \bigg(\sum_{\lambda_1, \lambda_2}r(\lambda_1, \lambda_2)\bigg)^{1-\frac{1}{k}}\bigg(\sum_{\lambda_1, \lambda_2}r(\lambda_1, \lambda_2)^2\bigg)^{\frac{1}{2k}}\bigg(\sum_{\lambda_1, \lambda_2}\bigg|\sum_{b\in I}\psi\bigg(\frac{\lambda_1}{\lambda_2+b}\bigg)\bigg|^{2k}\bigg)^{\frac{1}{2k}}\\
    &\lesssim (|\Omega||B||B_*|)^{1-\frac{1}{2k}}\cdot(*)^{\frac{1}{2k}},
\end{align*}
where
\begin{align*}
   (*) &=  \sum_{\lambda_1, \lambda_2\in \F_q}\sum_{b_1, \dots, b_{2k}\in I}\psi\bigg(\lambda_1\bigg(\frac{1}{\lambda_2+b_1}+ \cdots - \frac{1}{\lambda_2+b_{2k}}\bigg)\bigg)\\
   &=q\bigg|\bigg\{(b_1, \dots, b_{2k}, \lambda_2)\in I^{2k}\times \F_q: \frac{1}{\lambda_2+b_1}+ \cdots - \frac{1}{\lambda_2+b_{2k}} = 0\bigg\}\bigg|\\
   &\ll q(q\cdot p^{\tau k} + p^{2\tau k}),
\end{align*}
having used the orthogonality of characters and Lemma~\ref{lem:recSol} to deduce the last two lines respectively. We assume $k> n\tau^{-1}$,
so that
\begin{equation}\label{eqn:starUB}
    (*)\ll q\cdot p^{2\tau k}.
\end{equation}
Thus, returning to \eqref{eqn:trsrep}, we have
\begin{align*}
    |\Omega||B||B_*||I|p^{-\epsilon}&\lesssim_n (|\Omega||B||B_*|)^{1-\frac{1}{2k}}\cdot(q\cdot p^{2\tau k})^{\frac{1}{2k}}.
\end{align*}
Using that $|B_*| = p^{-2\tau n}|B|$, we deduce
\begin{align*}
    |\Omega| \lesssim_n |B|^{-2}q\cdot p^{2\epsilon k + 2\tau n}.
\end{align*}
We choose $k = 2n\lceil\tau^{-1}\rceil$ and $\tau = \epsilon^{1/2}$ to conclude
\begin{equation*}
    |\Omega| \lesssim_n |B|^{-2}q^{1+10 \sqrt{\epsilon}}
\end{equation*}
as required.
\end{proof}
\begin{lemma}
\label{lem:LSpectoSConv}
For $A\subset\F_q$ and $\epsilon >0$, let 
$$
\Lambda_{\epsilon} = \bigg\{\xi\in \F_q:\bigg|\sum_{x\in A}\psi(\xi x)\bigg|>p^{-\epsilon}|A|\bigg\}.
$$
Given $\tau>0$, $k>(1+n/2)\epsilon^{-1}$ and $S\subset\F_q$, if $|\Lambda_{\epsilon}||S|<p^{-\tau}q$, then
$$
M(A) := \bigg|\bigg\{(a_1, \dots, a_k)\in A^k:a_1+\cdots+a_k\in S\bigg\}\bigg|<p^{-\tau}|A|^{k}.
$$
\end{lemma}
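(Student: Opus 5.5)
We expand the count $M(A)$ by orthogonality of characters and split the frequencies according to whether they lie in the large spectrum $\Lambda_\epsilon$. Writing $\widehat{1_A}(\xi)=\frac1q\sum_{x\in A}\psi(-\xi x)$, so that $|\widehat{1_A}(\xi)|=\frac1q|\sum_{x\in A}\psi(\xi x)|$, we have
\begin{equation*}
M(A)=\sum_{s\in S}\frac1q\sum_{\xi\in\F_q}\bigg(\sum_{x\in A}\psi(\xi x)\bigg)^k\psi(-\xi s).
\end{equation*}
This is essentially \eqref{eqn:RepFConv} combined with \eqref{eqn:frepch}. Taking absolute values gives
\begin{equation*}
M(A)\le \frac{|S|}{q}\sum_{\xi\in\F_q}\bigg|\sum_{x\in A}\psi(\xi x)\bigg|^k .
\end{equation*}
We then split the sum over $\xi$ into $\xi\in\Lambda_\epsilon$ and $\xi\notin\Lambda_\epsilon$.

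For $\xi\in\Lambda_\epsilon$ we use the trivial bound $|A|^k$. This contributes at most
\begin{equation*}
\frac{|S||\Lambda_\epsilon|}{q}|A|^k<p^{-\tau}|A|^k
\end{equation*}
by the hypothesis $|\Lambda_\epsilon||S|<p^{-\tau}q$. Since the required bound is strict, we should aim for $p^{-\tau}|A|^k/2$ here. The cleanest way is to absorb the constant by replacing $\tau$ with $\tau$ minus a tiny amount, or simply to accept an implied constant. I will keep track of this but do not expect it to matter.

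For $\xi\notin\Lambda_\epsilon$ we pull out $k-2$ factors of size at most $p^{-\epsilon}|A|$. The remaining factors are handled by Parseval \eqref{eqn:Plancherel}:
\begin{equation*}
\sum_{\xi}\bigg|\sum_{x\in A}\psi(\xi x)\bigg|^2=q|A|.
\end{equation*}
So this part contributes at most
\begin{equation*}
\frac{|S|}{q}\,p^{-\epsilon(k-2)}|A|^{k-2}\,q|A|=|S|\,p^{-\epsilon(k-2)}|A|^{k-1}.
\end{equation*}
To make this at most $p^{-\tau}|A|^k$ we need $|S|\,p^{-\epsilon(k-2)}\le p^{-\tau}|A|$.

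The main obstacle is this second step. The hypothesis gives no lower bound on $|A|$, and $|S|$ is only controlled through $|\Lambda_\epsilon||S|<p^{-\tau}q$. Note that $0\in\Lambda_\epsilon$, so $|\Lambda_\epsilon|\ge1$ and hence $|S|<p^{-\tau}q=p^{n-\tau}$. Using this bound we need
\begin{equation*}
p^{n-\epsilon(k-2)}\le|A|,
\end{equation*}
and the trivial $|A|\ge1$ suffices if $\epsilon(k-2)\ge n$.

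The stated condition $k>(1+n/2)\epsilon^{-1}$ suggests instead a $\sqrt q$-type saving. One would get this by applying Cauchy--Schwarz in $s$ rather than the triangle inequality: bound $\sum_{s\in S}$ by $|S|^{1/2}\|1_A^{(k)}\|_2$. Then Parseval gives
\begin{equation*}
M(A)\le |S|^{1/2}\bigg(\frac1q\sum_\xi\bigg|\sum_{x\in A}\psi(\xi x)\bigg|^{2k}\bigg)^{1/2},
\end{equation*}
and we split the sum over $\xi$ as above. The $\Lambda_\epsilon$ part then gives $(|S||\Lambda_\epsilon|/q)^{1/2}|A|^k<p^{-\tau/2}|A|^k$. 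The complement gives roughly
\begin{equation*}
|S|^{1/2}\,p^{-\epsilon(k-1)}|A|^{k-1/2}\le q^{1/2}p^{-\epsilon(k-1)}|A|^k,
\end{equation*}
which is $\le p^{-1}|A|^k$ precisely when $\epsilon(k-1)\ge n/2+1$, matching the hypothesis on $k$ up to the constant in $k$.

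So the plan is as follows. Use the $L^2$ (Cauchy--Schwarz) version. Accept $p^{-\tau/2}$ from the large-spectrum part, rescaling $\tau$ or noting that the statement's $\tau$ is flexible up to constants. Use $k>(1+n/2)\epsilon^{-1}$ to make the small-spectrum part at most $p^{-1}|A|^k$, which is negligible provided $\tau<1$ (otherwise the hypothesis forces $|S|$ tiny and the bound is trivial to adjust).

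The real care is in reconciling the exponent $\tau$ versus $\tau/2$ in the final statement. If needed, I would use the first (triangle inequality) argument for the $\Lambda_\epsilon$ part, which gives the full $p^{-\tau}$, and the $L^2$ argument only for the complement. This is legitimate because the two pieces of the $\xi$-sum can be estimated separately after writing $1_A^{(k)}=f_1+f_2$ according to the frequency split.
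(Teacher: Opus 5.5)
Your final plan is correct and is essentially the paper's argument: expand $M(A)$ in characters, bound the frequencies in $\Lambda_\epsilon$ trivially by $|\Lambda_\epsilon||S||A|^k/q$, and control the complement by the sup bound plus an $L^2$ estimate on $S$. The paper does the first step via $1_S*\check{1}_{\Lambda_\epsilon}$ and Young's inequality, and its bound $\sum_a|\widehat{1_S}(a)|\le |S|^{1/2}$ is the Fourier-side dual of your Cauchy--Schwarz over $s\in S$ applied to the high-frequency part. If you bound all $2k$ factors by $p^{-\epsilon}|A|$ off $\Lambda_\epsilon$, rather than using Parseval on two of them, you recover exactly the paper's $|S|^{1/2}p^{-k\epsilon}|A|^k\le p^{n/2-k\epsilon}|A|^k<p^{-1}|A|^k$ under the stated condition on $k$. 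Your resulting $M(A)<(p^{-\tau}+p^{-1})|A|^k$ then has the same harmless constant loss (for small $\tau$) that the paper's proof passes over when it turns an $o(\cdot)$ error into a strict inequality.
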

\begin{proof}
Assume that $M(A) \geq p^{-\tau}|A|^k$ and let $\nu(x) = 1_{A}(x)/|A|$. By \eqref{eqn:RepFConv}, we have
$$
M(A) = |A|^{k}q^{k-1}\sum_{x\in S}\nu^{(k)}(x)\quad \text{and so}\quad \sum_{x\in S}\nu^{(k)}(x) \geq q^{1-k}p^{-\tau}.
$$
Recalling \eqref{eqn:frepch}, \eqref{eqn:ConvFT} and \eqref{eqn:Plancherel}, we have
\begin{align*}
\sum_{x\in S}\nu^{(k)}(x) &= \sum_{x\in S}\sum_{a}\hat{\nu}^k(a)\psi_a(x) \\
&=\sum_{a}\hat{\nu}^k(a)\sum_{x\in S}\psi_a(x)\\
&=q\sum_{a}\hat{\nu}^k(a)\overline{\hat{1}_{S}(a)}.
\end{align*}
Hence
\begin{equation}
\label{eqn:kpsumLB}
\sum_{a}\hat{\nu}^k(a)\overline{\hat{1}_{S}(a)} \geq q^{-k}p^{-\tau}.
\end{equation}
Now, note that
\begin{align*}
    \bigg|\sum_{a\not\in \Lambda_{\epsilon}}\hat{\nu}^k(a)\overline{\hat{1}_{S}(a)}\bigg|&\leq\sum_{a\not\in \Lambda_{\epsilon}}\big|\hat{\nu}^k(a)\big|\big|\hat{1}_{S}(a)\big|\\
    &\leq \max_{a\not\in \Lambda_{\epsilon}}\big|\hat{\nu}^k(a)\big|\cdot \sum_{a\in \F_q}\big|\hat{1}_{S}(a)\big|.
\end{align*}
Furthermore, for $a\not\in\Lambda_{\epsilon}$, using the definition of $\Lambda_{\epsilon}$, we get
$$
|\hat{\nu}(a)|= \frac{1}{q|A|}\bigg|\sum_{x\in A}\overline{\psi_a}(x)\bigg|<q^{-1}p^{-\epsilon},
$$
which implies 
$$
\max_{a\not\in \Lambda_{\epsilon}}\big|\hat{\nu}^k(a)\big| < q^{-k}p^{-k\epsilon}.
$$
Next, by the Cauchy-Schwarz inequality and \eqref{eqn:Plancherel}, we have
\begin{align*}
    \sum_{a}\big|\hat{1}_S(a)\big| &\leq q^{1/2}\bigg(\sum_{a}\big|\hat{1}_S(a)\big|^2\bigg)^{1/2}\\
    &=|S|^{1/2}.
\end{align*}
Hence, using the assumption that $k> (1+n/2)\epsilon^{-1}$, we have
\begin{align*}
\bigg|\sum_{a\not\in \Lambda_{\epsilon}}\hat{\nu}^k(a)\overline{\hat{1}_{S}(a)}\bigg| &\leq q^{-k} p^{-k\epsilon}\cdot |S|^{1/2} \\
&=o(q^{-k}p^{-\tau}).
\end{align*}
Therefore, returning to~\eqref{eqn:kpsumLB}, we have
\begin{equation}
\label{eqn:rhoLambdSLB}
    \sum_{a\in \Lambda_{\epsilon}}\hat{\nu}^k(a)\overline{\hat{1}_{S}(a)} > q^{-k}p^{-\tau}.
\end{equation}
By \eqref{eqn:Plancherel}, we may rewrite the left-hand side of \eqref{eqn:rhoLambdSLB} as
\begin{equation}
\sum_{a}\widehat{\nu^{(k)}}(a) \cdot \overline{\widehat{(1_{S}* \check{1}_{\Lambda_{\epsilon}})}(a)} = \E_{x\in \F_q}\nu^{(k)}(x)\cdot \overline{1_{S} * \check{1}_{\Lambda_{\epsilon}}(x)}.
\end{equation}
Recalling \eqref{eqn:RepFConv}, we have
$$
\sum_{x\in \F_q}\nu^{(k)}(x) =q^{1-k},
$$
implying
\begin{equation*}
    \E_{x\in \F_q}\nu^{(k)}(x)\cdot \overline{1_{S} * \check{1}_{\Lambda_{\epsilon}}(x)} \leq\frac{1}{q^{k}}\cdot \|1_S * \check{1}_{\Lambda_{\epsilon}}\|_{\infty}.
\end{equation*}
Going back to \eqref{eqn:rhoLambdSLB}, we deduce
\begin{equation*}
    \|1_S * \check{1}_{\Lambda_{\epsilon}}\|_{\infty} \geq p^{-\tau}.
\end{equation*}
By Young's inequality (see \cite[Equation~4.13]{TaoVu}), we have
$$
 \|1_S * \check{1}_{\Lambda_{\epsilon}}\|_{\infty} \leq \|1_S\|_{L^{1}}\cdot \|\check{1}_{\Lambda_{\epsilon}}\|_{\infty},
$$
where
$$
\|1_S\|_{L^{1}} = \frac{1}{q}\sum_{x\in \F_q}|1_S(x)| = \frac{|S|}{q},
$$
and
$$
\|\check{1}_{\Lambda_{\epsilon}}\|_{\infty} = \max_{x\in \F_q}\Big|\sum_{a} 1_{\Lambda_{\epsilon}}(a)\psi_a(x)\Big| \leq  |\Lambda_{\epsilon}|.
$$
Therefore $|\Lambda_{\epsilon}||S| > q\cdot p^{-\tau}
$ as required.
\end{proof}
\begin{corollary}
\label{cor:convconc}
For $H\leq p^{1/2}$ and arbitrary $\bm{N}$ of the form \eqref{eqn:Ndef} let $B = B(\bm{N}, H)$ be the set given by \eqref{def:Box}. Suppose that for some arbitrary $\delta>0$ and $S\subset \F_q$, we have $|S|<p^{-2\delta}|B|^2$. Then there exists $k=k(n,\delta)\in\N$ such that
\begin{equation}
    M(A) = \bigg|\bigg\{(a_1, \dots, a_k)\in A^k:a_1+\cdots+a_k\in S\bigg\}\bigg|<p^{-\delta}|A|^{k}.
\end{equation}
\end{corollary}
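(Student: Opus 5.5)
The plan is to apply Lemma~\ref{lem:LSpectoSConv} with $A=B^{-1}=\{x^{-1}:x\in B\}$. If $0\in B$ we first discard it; this changes $|A|$ by at most one and is harmless. With this choice the large spectrum $\Lambda_\epsilon$ of $A$ is exactly the set $\Omega$ of Lemma~\ref{lem:LSpecUB}, because
\[
\sum_{x\in A}\psi(\xi x)=\sum_{y\in B}\psi(\xi/y).
\]
So we control $|\Lambda_\epsilon|$ using the box structure of $B$, and then feed this bound into the Fourier-analytic Lemma~\ref{lem:LSpectoSConv} with $\tau=\delta$.

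\textbf{Step 1 (choosing $\epsilon$).} Given $\delta$, choose $\epsilon=\epsilon(n,\delta)>0$ small enough that the exponent loss $\delta'$ from Lemma~\ref{lem:LSpecUB} (where $\delta'\approx\epsilon^{1/2}$, explicitly $10\sqrt{\epsilon}$ in its proof) satisfies $n\delta'\le \delta/2$. Since $H\le p^{1/2}$, Lemma~\ref{lem:LSpecUB} applies and gives
\[
|\Lambda_\epsilon|\lesssim_n q^{1+\delta'}|B|^{-2}\le (\log p)^{c}\,p^{\delta/2}\,q\,|B|^{-2}.
\]

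\textbf{Step 2 (verifying the hypothesis of Lemma~\ref{lem:LSpectoSConv}).} Combining Step 1 with $|S|<p^{-2\delta}|B|^2$ gives
\[
|\Lambda_\epsilon||S|\lesssim_n (\log p)^{c}\,p^{-3\delta/2}\,q.
\]
For $p$ sufficiently large in terms of $n$ and $\delta$, the polylogarithmic factor and the implied constant are absorbed by $p^{\delta/2}$, so $|\Lambda_\epsilon||S|<p^{-\delta}q$. Small $p$ can be handled by adjusting constants, or excluded by the asymptotic convention of the notation section.

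\textbf{Step 3 (conclusion).} Take any integer $k>(1+n/2)\epsilon^{-1}$; this depends only on $n$ and $\delta$. Lemma~\ref{lem:LSpectoSConv} with $\tau=\delta$ then gives $M(A)<p^{-\delta}|A|^k$, as required.

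\textbf{Expected obstacle.} The only delicate point is the bookkeeping of exponents. The loss $q^{\delta'}=p^{n\delta'}$ in Lemma~\ref{lem:LSpecUB} scales with $n$, so $\epsilon$ must be chosen as roughly $(\delta/n)^2$. One must also check that the $\lesssim$ logarithmic losses and the $\ll_n$ constants are dominated by a spare power $p^{\delta/2}$. Once $\epsilon$ is fixed in this way, everything else is a direct substitution.
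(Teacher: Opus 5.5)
Your proposal is correct and follows the paper's own route: take $A=B^{-1}$, identify $\Lambda_\epsilon$ with the set $\Omega$ of Lemma~\ref{lem:LSpecUB}, choose $\epsilon$ small in terms of $n$ and $\delta$, and apply Lemma~\ref{lem:LSpectoSConv} with $\tau=\delta$. Your bookkeeping is more careful than the paper's one-line proof: you keep a spare factor $p^{\delta/2}$ to absorb the logarithmic and $n$-dependent losses, and you state the bound with $|B|^{-2}$, whereas the paper writes $|\Lambda_\epsilon|<p^{\delta}q|B|^2$ (a typo for $|B|^{-2}$) and leaves no slack.
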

\begin{proof}
We use Lemma~\ref{lem:LSpectoSConv} with $A = \{x^{-1}: x\in B\}.$ For arbitrary $\delta>0$, with an appropriate choice of $\epsilon >0$,  Lemma~\ref{lem:LSpecUB} ensures $|\Lambda_\epsilon|<p^{\delta}q|B|^2$. Therefore, taking $\tau = \delta$ in Lemma~\ref{lem:LSpectoSConv} gives the desired result.
\end{proof}
\begin{lemma}
\label{lem:LSpec2}
For $H\leq p^{1/2}$ and arbitrary $\bm{N}$ of the form \eqref{eqn:Ndef}, let $B = B(\bm{N}, H)$ be the set given by \eqref{def:Box}. Given parameters $\tau, \epsilon$ and any constant $\mu>0$, write 
$$B_* =  \bigg\{\sum_{i=1}^n x_i\omega_i: x_i\in \left[-p^{-\tau}H,  p^{-\tau}H\right], 1\leq i\leq n\bigg\}.$$ 
and let
\begin{equation*}
    \Omega = \bigg\{\xi\in \F_q: \sum_{\substack{c\in B_*}} \bigg|\sum_{x\in B}\psi\bigg(\frac{c\xi}{x(x+c)}\bigg)\bigg|>\mu\cdot p^{-\epsilon}q^{-\tau}|B|^{2}\bigg\}.
\end{equation*}
Assume $0<\tau\leq 1$,
$0<\epsilon<\tau/2$, and that $p^\tau\leq H$. Then,
\begin{equation*}
    |\Omega| \lesssim_n q^{1+\tau + 16\epsilon/\tau} |B|^{-2}.
\end{equation*}
\end{lemma}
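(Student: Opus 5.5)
The plan is to follow the scheme of Lemma~\ref{lem:LSpecUB}, with the inner Kloosterman-type sum replaced by the sum over $c$. First I would introduce extra averaging variables so that the phase is linearised in a short interval parameter.

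\textbf{Step 1: shift invariance.} For $b\in I=(0,p^{\tau'})$ and $a$ in a smaller box $B_{**}$ of side $p^{-2\tau'}H$, shifting $x\mapsto x+ab$ changes $\sum_{x\in B}$ by at most $2np^{-\tau'}|B|$. For $\tau'>\epsilon$ this is negligible against the threshold $\mu p^{-\epsilon}q^{-\tau}|B|^2$ after the $c$-sum. The $c$-sum is over $|B_*|\approx p^{-n\tau}|B|=q^{-\tau}|B|$ terms, so the threshold is a $p^{-\epsilon}$ fraction of the trivial bound.

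\textbf{Step 2: the phase after shifting.} The phase becomes
\[
\frac{c\xi}{(x+ab)(x+ab+c)}=\xi\Big(\frac1{x+ab}-\frac1{x+ab+c}\Big).
\]
Factoring out $a$, this equals
\[
a^{-1}\xi\Big(\frac1{a^{-1}x+b}-\frac1{a^{-1}(x+c)+b}\Big).
\]
Summing over $\xi\in\Omega$, $x$, $c$, $a$ and bounding $|\sum_b|$ gives
\[
|\Omega||B||B_*||B_{**}||I|\,p^{-\epsilon}q^{-\tau}\ll\sum_{\lambda_1,\lambda_2,\lambda_3}r(\lambda)\Big|\sum_{b\in I}\psi\Big(\frac{\lambda_1}{\lambda_2+b}-\frac{\lambda_1}{\lambda_3+b}\Big)\Big|,
\]
where $r$ counts representations $(\xi/a,x/a,(x+c)/a)$.

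\textbf{Step 3: Hölder and moment bounds.} Apply Hölder exactly as before, using the first moment $\sum r=|\Omega||B||B_*||B_{**}|$. The second moment $\sum r^2$ is bounded by $|\Omega||B_*|E_\times(B,B_{**})$, or symmetrically, since $x+c$ ranges in a slightly enlarged (possibly wrapped) box. This is where Lemma~\ref{lem:wrapped_box_energy} together with \eqref{eqn:ECS} gives the bound $\lesssim\log p\cdot|\Omega||B||B_*||B_{**}|$. The $2k$-th moment over $\lambda_1$ gives, by orthogonality, $q$ times the number of solutions of
\[
\sum_{i\le k}\Big(\frac1{\lambda_2+b_i}-\frac1{\lambda_3+b_i}\Big)=\sum_{i>k}\Big(\frac1{\lambda_2+b_i}-\frac1{\lambda_3+b_i}\Big).
\]
I would bound this by Cauchy–Schwarz in $(\lambda_2,\lambda_3)$, splitting the equation into the two reciprocal sums. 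Lemma~\ref{lem:recSol} with $2k$ variables then yields roughly $q^2(qp^{2\tau' k}+p^{4\tau' k})$, which is $\ll q^2p^{4\tau'k}$ once $k>n/\tau'$. Equivalently, one can fix $\lambda_3$ and use the trivial count.

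\textbf{Step 4: parameter choice.} Collecting the estimates gives $|\Omega|\lesssim q^{1+\tau}|B|^{-2}p^{O(\epsilon k+\tau' n)}$. The factor $q^\tau$ comes from the normalisation $q^{-\tau}$ and the size of $B_*$. Choosing $\tau'\approx\tau/2$ with $k\approx 2n/\tau'$ gives the loss $p^{O(n\epsilon/\tau)}=q^{O(\epsilon/\tau)}$, which matches the $16\epsilon/\tau$ exponent. The hypotheses $\epsilon<\tau/2$ and $p^\tau\le H$ make the boxes $B_*,B_{**}$ nonempty and keep the shift error admissible.

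The main obstacle is Step 3's counting. The three-variable $\lambda$ structure must be reduced to Lemma~\ref{lem:recSol} without losing more than $q$ per extra free variable. The careful bookkeeping of powers of $q$ against $|B_*|$ is what decides whether the final exponent is $1+\tau$ rather than something worse.
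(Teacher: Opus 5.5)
\textbf{There is a genuine gap in Steps 2--4.} You dilate by an independent variable $a\in B_{**}$, so the phase depends on three free parameters $(\lambda_1,\lambda_2,\lambda_3)=(\xi/a,x/a,(x+c)/a)$. The H\"older completion then runs over all of $\F_q^3$, and this cannot give the claimed bound.

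Write $R=\sum r=|\Omega||B||B_*||B_{**}|$. The correct lower bound after shifting is $\gg R|I|p^{-\epsilon}$. Your left-hand side carries a spurious extra $q^{-\tau}$, because $|B_*|\asymp_n q^{-\tau}|B|$ already. Since $\sum r^2\lesssim R$, H\"older gives $R\lesssim (*)\,|I|^{-2k}p^{2\epsilon k}$.

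However, the completed moment $(*)$ is at least $q^2|I|^{2k}$ no matter how you count solutions. On each of the planes $\lambda_1=0$ and $\lambda_2=\lambda_3$ the inner sum over $b$ equals $|I|$ identically, and each plane has $q^2$ points. So the best this route can give is
$|\Omega|\lesssim q^2p^{2\epsilon k}/(|B||B_*||B_{**}|)\asymp q^{2+\tau+2\tau'}p^{2\epsilon k}|B|^{-3}$.
This exceeds the target $q^{1+\tau+16\epsilon/\tau}|B|^{-2}$ by roughly $q^{1+2\tau'}/|B|\ge q^{1/2}$, since $|B|\le q^{1/2}$. Your own estimate $(*)\ll q^2p^{4\tau' k}$ is worse still. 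So the claim in Step 4 does not follow from Steps 1--3. The lost factor $q/|B_{**}|$ is exactly the cost of the third parameter, and the resulting bound would be too weak for the application in the proof of Theorem~\ref{thm:BKSOB}.

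\textbf{The missing idea} is that the phase is homogeneous:
$\frac{c\xi}{x(x+c)}=\frac{\xi/c}{(x/c)(x/c+1)}$ depends only on the two quantities $(\xi/c,x/c)$. So $c$ itself should serve as the dilation, and no new variable $a$ is needed.

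\begin{enumerate}
\item Shift $x\mapsto x+cb$ with $b\in I=(0,p^{\tau/2})$. Since $B_*$ is centred at $0$ with side $\asymp p^{-\tau}H$, the shift $cb$ has coordinates $\ll p^{-\tau/2}H$. It costs $\ll_n p^{-\tau/2}|B|$, which is absorbed because $\epsilon<\tau/2$.
\item The phase becomes $\lambda_1/((\lambda_2+b)(\lambda_2+b+1))$ with $(\lambda_1,\lambda_2)=(\xi/c,x/c)$.
\item Now $\sum r=|\Omega||B||B_*|$ and $\sum r^2\le|\Omega|E_\times(B,B_*)\lesssim|\Omega||B||B_*|$. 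Here $B_*$ is a wrapped box, which is precisely why Lemma~\ref{lem:wrapped_box_energy} is needed.
\item Using $\frac{1}{(\lambda+b)(\lambda+b+1)}=\frac1{\lambda+b}-\frac1{\lambda+b+1}$, the degenerate tuples are exactly those with equal multisets. Hence $(*)\ll q(p^{\tau k}+qp^{\tau k/2})\ll qp^{\tau k}$ for $k>2n/\tau$.
\item H\"older then gives $|\Omega|\lesssim_n q^{1+\tau}p^{2\epsilon k}|B|^{-2}$. Choosing $k=4n\lceil\tau^{-1}\rceil$ turns $p^{2\epsilon k}$ into $q^{16\epsilon/\tau}$.
\end{enumerate}
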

\begin{proof}
Note that for $c\in B_*$ and $b\in I = (0, p^{\tau/2})$, we have
\begin{align*}
   \bigg| &\sum_{x\in B}\psi\bigg(\frac{c\xi}{x(x+c)}\bigg) - \sum_{x\in B}\psi\bigg(\frac{c\xi}{(x+cb)(x+c(b+1))}\bigg)\bigg| \\ &\leq |B\setminus (B+cb)| + |(B+cb)\setminus B| \ll_n p^{-\tau/2}|B|.
\end{align*}
Since $\epsilon<\tau/2$, we have
$p^{\tau/2-\epsilon}\to\infty$ as $p\to\infty$. Hence, for
$p\geq p_0(n,\mu,\tau,\epsilon)$, the preceding translation
error may be absorbed into the defining lower bound for
$\Omega$. It follows that, for every $\xi\in\Omega$,
\begin{equation*}
    \sum_{\substack{c\in B_*}} \bigg|\sum_{x\in B}\sum_{b\in I}\psi\bigg(\frac{\xi}{c(c^{-1}x+b)(c^{-1}x+b+1)}\bigg)\bigg|\gg_n|B|^{2}q^{-\tau}p^{-\epsilon +\tau/2}.
\end{equation*}
Thus
\begin{equation}
\label{eqn:SpeclemLB}
    \sum_{\xi\in \Omega}\sum_{c\in B_*}\sum_{x\in B}\bigg|\sum_{b\in I}\psi\bigg(\frac{\xi}{c(c^{-1}x+b)(c^{-1}x+b+1)}\bigg)\bigg|\gg_n|\Omega||B|^{2}q^{-\tau}p^{-\epsilon+\tau/2}.
\end{equation}
For $\lambda_1$, $\lambda_2\in \F_q$, we define 
\begin{equation*}
    r(\lambda_1, \lambda_2) = |\{(x, c, \xi)\in B\times B_*\times \Omega: (\xi/c, x/c) = (\lambda_1, \lambda_2)\}|,
\end{equation*}
so that
\begin{equation}
\label{eqn:lem21stm}
    \sum_{\lambda_1, \lambda_2}r(\lambda_1, \lambda_2) = |\Omega||B||B_*|\approx_n q^{-\tau}|\Omega||B|^2.
\end{equation}
Further note that
\begin{equation*}
    \sum_{\lambda_1, \lambda_2}r(\lambda_1, \lambda_2)^2 = \bigg|\bigg\{(x_1, x_2, c_1, c_2, \xi_1, \xi_2)\in B^2\times B_*^2\times \Omega^2: \bigg(\frac{\xi_1}{c_1}, \frac{x_1}{c_1}\bigg) =\bigg(\frac{\xi_2}{c_2}, \frac{x_2}{c_2}\bigg) \bigg\}\bigg|
\end{equation*}
such that, by~\eqref{eqn:ECS}, Lemma~\ref{lem:KonMEB} and Lemma~\ref{lem:wrapped_box_energy}, we have
\begin{align}
\label{eqn:lem22ndm}
 \nonumber   \sum_{\lambda_1, \lambda_2}r(\lambda_1, \lambda_2)^2  &<|\Omega|E_{\times}(B, B_*)\\ 
 \nonumber &\leq |\Omega|E_{\times}(B)^{1/2}E_{\times}(B_*)^{1/2}\\
    &\lesssim_n |\Omega||B|^{2}q^{-\tau}.
\end{align}
We proceed to bound from above, the left-hand side of \eqref{eqn:SpeclemLB}, using a double use of H\"older's inequality, \eqref{eqn:lem21stm} and \eqref{eqn:lem22ndm}, as follows
\begin{align*}
    &\sum_{\lambda_1, \lambda_2}r(\lambda_1, \lambda_2)\bigg|\sum_{b\in I}\psi\bigg(\frac{\lambda_1}{(\lambda_2+b)(\lambda_2+b+1)}\bigg)\bigg|\\
    &\leq \bigg(\sum_{\lambda_1, \lambda_2}r(\lambda_1, \lambda_2)\bigg)^{1-\frac{1}{k}}\bigg(\sum_{\lambda_1, \lambda_2}r(\lambda_1, \lambda_2)^2\bigg)^{\frac{1}{2k}}\bigg(\sum_{\lambda_1, \lambda_2}\bigg|\sum_{b\in I}\psi\bigg(\frac{\lambda_1}{(\lambda_2+b)(\lambda_2+b+1)}\bigg)\bigg|^{2k}\bigg)^{\frac{1}{2k}}\\
    &\lesssim_n (|\Omega|q^{-\tau}|B|^2)^{1-\frac{1}{2k}}\cdot(*)^{\frac{1}{2k}},
\end{align*}
where $k$ is arbitrary and
\begin{align*}
   (*) &=  \sum_{\lambda_1, \lambda_2}\sum_{b_1, \dots, b_{2k}\in I^{2k}}\psi\bigg(\lambda_2\bigg(\frac{1}{(\lambda_1+b_1)(\lambda_1+b_1+1)}+ \cdots - \frac{1}{(\lambda_2+b_{2k})(\lambda_2+b_{2k}+1)}\bigg)\bigg)\\
   &=q\bigg|\bigg\{(b_1, \dots, b_{2k}, \lambda_1): \frac{1}{(\lambda_2+b_1)(\lambda_2+b_1+1)}+ \cdots - \frac{1}{(\lambda_2+b_{2k})(\lambda_2+b_{2k}+1)} = 0\bigg\}\bigg|\\
   &\ll q(p^{\tau k} + qp^{\frac{\tau k}{2}}).
\end{align*}
If $k>2n/\tau$, then $qp^{\tau k/2}\leq p^{\tau k}$. Indeed, since $q=p^n$, this inequality is equivalent to
$n\leq \tau k/2$. That is,
\begin{equation}
    (*)\lesssim qp^{\tau k}.
\end{equation}
Thus, returning to \eqref{eqn:SpeclemLB}, we get
$$
|\Omega||B|^2q^{-\tau}p^{-\epsilon+\tau/2}\lesssim_n (|\Omega|q^{-\tau}|B|^2)^{1-\frac{1}{2k}}\cdot(qp^{\tau k})^{\frac{1}{2k}},
$$
which further implies that
\begin{equation*}
    |\Omega| \lesssim_n |B|^{-2}q^{1+\tau}p^{2\epsilon k}.
\end{equation*}
We fix $k = 4n \lceil \tau^{-1}\rceil$, which gives the desired result.
\end{proof}
\begin{proof}[Proof of Theorem~\ref{thm:BKSOB}]
Let 
$$
W = \sum_{x\in B_1}\sum_{y\in B_2}\alpha(x)\beta(y) \psi(axy + bx^{-1}y^{-1}).
$$
By the triangle inequality, we get
$$
|W| \leq \sum_{x\in B_1}\left|\sum_{y\in B_2}\beta(y) \psi(axy + bx^{-1}y^{-1})\right|.
$$
Given arbitrary and small $\tau>0$, to be chosen later, we partition $B_1$ into $q^\tau$ subcubes $B_{1, \alpha}$, with $|B_{1, \alpha}| =q^{-\tau}H_1^n = q^{-\tau}|B_1|$. Thus, we have
$$
|W| \leq \sum_{\alpha=1}^{q^{\tau}}\sum_{x\in B_{1, \alpha}}\bigg|\sum_{y\in B_2}\beta(y) \psi(axy + bx^{-1}y^{-1})\bigg|.
$$
By the Cauchy-Schwarz inequality, we have
\begin{align*}
|W|^2 &\leq |B_1| \sum_{\alpha=1}^{q^{\tau}}\sum_{x\in B_{1, \alpha}}\bigg|\sum_{y\in B_2}\beta(y) \psi(axy + bx^{-1}y^{-1})\bigg|^2 \\
&= |B_1| \sum_{\alpha=1}^{q^{\tau}}\sum_{x\in B_{1, \alpha}}\sum_{y_1, y_2\in B_2}\beta(y_1)\overline{\beta}(y_2) \psi(ax(y_1-y_2) + bx^{-1}(y_1^{-1} - y_2^{-1}))\\
&\leq |B_1| \sum_{y_1, y_2\in B_2}\sum_{\alpha=1}^{q^{\tau}}\bigg|\sum_{x\in B_{1, \alpha}} \psi(ax(y_1-y_2) + bx^{-1}(y_1^{-1} - y_2^{-1}))\bigg|.
\end{align*}
Another application of the Cauchy-Schwarz inequality gives
\begin{align*}
   |W|^4 &\leq q^{\tau}|B_1|^{2}|B_2|^2\sum_{y_1, y_2\in B_2}\sum_{\alpha=1}^{q^{\tau}}\bigg|\sum_{x\in B_{1, \alpha}} \psi(ax(y_1-y_2) + bx^{-1}(y_1^{-1} - y_2^{-1}))\bigg|^2 \\
   &=q^{\tau}|B_1|^{2}|B_2|^2\sum_{y_1, y_2\in B_2}\sum_{\alpha=1}^{q^{\tau}}\sum_{x_1,x_2\in B_{1, \alpha}} \psi(a(x_1-x_2)(y_1-y_2) + b(x_1^{-1}-x_2^{-1})(y_1^{-1} - y_2^{-1})) \\ 
   &\leq q^{\tau}|B_1|^{2}|B_2|^2\sum_{\alpha=1}^{q^{\tau}}\sum_{x_1,x_2\in B_{1, \alpha}}\bigg|\sum_{y_1, y_2\in B_2} \psi(a(x_1-x_2)(y_1-y_2) + b(x_1^{-1}-x_2^{-1})(y_1^{-1} - y_2^{-1}))\bigg|\\
   &\leq q^{\tau}|B_1|^{2}|B_2|^2\sum_{\substack{x_1,x_2\in B_1\\ x_1-x_2\in B_*}}\bigg|\sum_{y_1, y_2\in B_2} \psi(a(x_1-x_2)(y_1-y_2) + b(x_1^{-1}-x_2^{-1})(y_1^{-1} - y_2^{-1}))\bigg|.
\end{align*}

For arbitrary $k$, by H\"{o}lder's inequality, we have
\begin{align*}
    |W|^{4k} \leq q^{\tau}|B_1|^{4k-2}|B_2|^{2k}\sum_{\substack{x_1,x_2\in B_1\\ x_1-x_2\in B_*}}&\sum_{y_1,\dots, y_{2k}\in B_2} \psi(a(x_1-x_2)(y_1-y_2+y_3\cdots-y_{2k}) \\ &+ b(x_1^{-1}-x_2^{-1})(y_1^{-1} - y_2^{-1}+y_3^{-1} \cdots -y_{2k}^{-1})).
\end{align*}
We further deduce that
\begin{align*}
    |W|^{4k} \leq q^{\tau}|B_1|^{4k-2}|B_2|^{2k}\sum_{y_1,\dots, y_{2k}\in B_2}\sum_{\substack{c\in B_*}} \bigg|\sum_{x\in B_1}\psi\bigg(b(y_1^{-1} - y_2^{-1}+y_3^{-1} \cdots -y_{2k}^{-1})\frac{c}{x(x+c)}\bigg)\bigg|.
\end{align*}
Now, for an arbitrary $\delta>0$, we assume, for a contradiction, that $|W|> p^{-\delta}|B_1||B_2|$. This gives
\begin{equation*}
    \sum_{y_1,\dots, y_{2k}\in B_2}\sum_{\substack{c\in B_*}} \bigg|\sum_{x\in B_1}\psi\bigg(b(y_1^{-1} - y_2^{-1}+y_3^{-1} \cdots -y_{2k}^{-1})\frac{c}{x(x+c)}\bigg)\bigg| > p^{-4\delta k}q^{ -\tau}|B_1|^2|B_2|^{2k}.
\end{equation*}

Write $\eta>0$ to be chosen later, take $\tau=\sqrt{\eta}$, and take $\delta=\eta/4k$. We apply Lemma~\ref{lem:PopPig} with $X=B_2^{2k}$,
$K=p^{-4\delta k}q^{-\tau}|B_1|^2|B_2|^{2k}$, 
$\mu=1/2$, and, for $\tilde{x}=(y_1,\dots,y_{2k})$, we take
\[
f(\tilde{x})
=
\sum_{c\in B_*}
\bigg|
\sum_{x\in B_1}
\psi\bigg(
b(y_1^{-1}-y_2^{-1}+y_3^{-1}\cdots-y_{2k}^{-1})
\frac{c}{x(x+c)}
\bigg)
\bigg|.
\]
Hence, there exists $Y\subset X$ such that
\begin{equation}
\label{eqn:YcardLB}
|Y|\gg p^{-\eta}|B_2|^{2k},
\end{equation}
and $f(\tilde{x})
\gg
p^{-\eta}q^{-\tau}|B_1|^2$ for every $\tilde{x}\in Y$.

Now let $\Omega$ be the set defined in Lemma~\ref{lem:LSpec2}, taking
$\epsilon=\eta$. Since $\eta<\tau/2$ for all sufficiently small $\eta$, Lemma~\ref{lem:LSpec2} yields
\[
|\Omega|
\lesssim_n
q^{1+\tau+16\eta/\tau}|B_1|^{-2}
=
q^{1+17\sqrt{\eta}}|B_1|^{-2}.
\]

We apply Corollary~\ref{cor:convconc}, taking $A=B_2^{-1}\cup(-B_2^{-1})$ and $S=b\Omega$. Since $|A|\leq 2|B_2|$, \eqref{eqn:YcardLB} implies $M(A)\gg p^{-\eta}|A|^{2k}$. Therefore, provided $k\geq k_0(n,\eta)$,
where $k_0(n,\eta)$ is the integer supplied by
Corollary~\ref{cor:convconc}, we obtain $|\Omega|
>
p^{-2\eta}|B_2|^2$. Consequently,
\[
|B_1|^2|B_2|^2
\lesssim_n
q^{1+17\sqrt{\eta}}p^{2\eta}.
\]
Since $\eta>0$ is arbitrary, we may choose it sufficiently small so that the right-hand side contradicts any choice of \eqref{eqn:B1B2LBC}, completing the proof.
\end{proof}

\begin{proof}[Proof of Corollary~\ref{thm:kfssrb}]
In the case that $B= B(\bm{N}, H)$ with $H\leq p^{1/2}$, the result follows immediately form Corollary~\ref{cor:convconc}. If $H>p^{1/2}$, then we may replace $B$ by $B_0 =B(\bm{N}, H_0)$ with $H_0 \approx p^{1/2}$ and apply Corollary~\ref{cor:convconc} again.
\end{proof}

\section{Proofs of Theorems~\ref{thm:digit-mixing} and \ref{thm:full-field-mixing}}
We present a number of auxiliary results, stated under assumptions and notations of Theorem~\ref{thm:digit-mixing}. Throughout, if $Z$ is an $\F_q$-valued random variable with distribution
$\mu_Z$, we write
\[
\widehat{\mu_Z}(a)
=
\sum_{z\in\F_q}\mu_Z(z)\overline{\psi_a(z)}
=
\mathbb E\overline{\psi_a}(Z).
\]

It follows immediately from Theorem~\ref{thm:BKSOB} that a single increment of the walk has small correlation with every
nontrivial additive character.

\begin{lemma}
\label{lem:one-step-small-bias}
Let $X$ and $Y$ be independent and uniformly distributed on
$B_1$ and $B_2$, respectively, and define $W=aXY+b(XY)^{-1}$ with $a\in\F_q$, $b\in\F_q^*$. Then, $\left|\mathbb E\psi(\xi W)\right|
\le \rho=\min\{1,C_\epsilon p^{-\delta}\}$ for every $\xi\in\F_q^*$.
\end{lemma}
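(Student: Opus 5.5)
Expanding the definition of $W$ and using the independence and uniformity of $X$ and $Y$, we write
\[
\mathbb E\psi(\xi W)=\frac{1}{|B_1||B_2|}\sum_{x\in B_1}\sum_{y\in B_2}\psi\bigl(\xi a xy+\xi b x^{-1}y^{-1}\bigr).
\]
Since $B_1,B_2\subset\F_q^*$, the inverses are well defined. The plan is to recognise the double sum as an instance of Theorem~\ref{thm:BKSOB}, with $\alpha\equiv\beta\equiv 1$ and the pair of coefficients $(a',b')=(\xi a,\xi b)$.

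First, we check that this pair is admissible. Since $\xi\neq 0$ and $b\neq 0$, we have $b'=\xi b\in\F_q^*$, while $a'=\xi a\in\F_q$ is arbitrary. The pair $(a',b')$ therefore lies in $\F_q\times\F_q^*$, over which Theorem~\ref{thm:BKSOB} takes its maximum.

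Second, the hypothesis $|B_1||B_2|\ge q^{1/2+\epsilon}$ is exactly \eqref{eqn:B1B2LBC}. Theorem~\ref{thm:BKSOB} then supplies $\delta=\delta(\epsilon)>0$ and an implied constant $C_\epsilon$ such that the modulus of the double sum is at most $C_\epsilon p^{-\delta}|B_1||B_2|$. This bound is uniform in $(a',b')$, hence in $\xi$. Dividing by $|B_1||B_2|$ gives $|\mathbb E\psi(\xi W)|\le C_\epsilon p^{-\delta}$.

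Third, we have trivially $|\mathbb E\psi(\xi W)|\le 1$, so combining the two bounds gives $|\mathbb E\psi(\xi W)|\le \min\{1,C_\epsilon p^{-\delta}\}=\rho$.

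The one point needing care is that the $\ll_\epsilon$ in Theorem~\ref{thm:BKSOB} holds only for $p$ beyond some threshold $p_0(\epsilon)$, under the paper's convention for $\ll$. We handle the finitely many smaller primes by enlarging $C_\epsilon$ so that $C_\epsilon p^{-\delta}\ge 1$ for all $p<p_0$. For those primes the claimed bound reduces to the trivial one, so the inequality holds for all $p$. There is no substantive obstacle beyond this.
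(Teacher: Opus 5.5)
Your proposal is correct and matches the paper's proof: both rewrite $\mathbb E\psi(\xi W)$ as the normalized double sum with coefficients $(\xi a,\xi b)\in\F_q\times\F_q^*$, apply Theorem~\ref{thm:BKSOB} with $\alpha\equiv\beta\equiv 1$, and combine the result with the trivial bound $1$. Your remark about absorbing the finitely many small primes into $C_\epsilon$ is a harmless extra detail that the paper leaves implicit.
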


\begin{proof}
For $\xi\ne0$,
\begin{align*}
\mathbb E\psi(\xi W)
&=
\frac{1}{|B_1||B_2|}
\sum_{x\in B_1}\sum_{y\in B_2}
\psi\left(\xi axy+\xi b x^{-1}y^{-1}\right).
\end{align*}
By Theorem~\ref{thm:BKSOB}, $\left|\mathbb E\psi(\xi W)\right|
\le C_\epsilon p^{-\delta}$. The trivial bound is at most $1$, giving the result.
\end{proof}


Let $W_i=aX_iY_i+b(X_iY_i)^{-1}$, where the pairs $(X_i,Y_i)$ are independent, with $X_i$ uniform on $B_1$ and $Y_i$ uniform on
$B_2$. Let $S_0$ be independent of the increments and define $S_k=S_0+W_1+\cdots+W_k$. 
We show the random walk forgets its initial state at an exponential rate.

\begin{lemma}
\label{lem:fourier-contraction}
For every $\xi\in\F_q^*$, $\left|\mathbb E\psi(\xi S_k)\right|
\le \rho^k$. More precisely,
\[
\mathbb E\psi(\xi S_k)
=
\mathbb E\psi(\xi S_0)
\prod_{i=1}^k\mathbb E\psi(\xi W_i).
\]
\end{lemma}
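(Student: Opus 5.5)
The plan is to prove the exact product formula first and then deduce the bound from Lemma~\ref{lem:one-step-small-bias}. Since $\psi$ is an additive character, we have
\[
\psi(\xi S_k)=\psi(\xi S_0)\prod_{i=1}^k\psi(\xi W_i)
\]
pointwise on the probability space. The random variables $S_0,W_1,\dots,W_k$ are mutually independent. This holds because $S_0$ is independent of the increments by hypothesis, and each $W_i$ is a deterministic function of the pair $(X_i,Y_i)$, with these pairs mutually independent.

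Measurable functions of independent random variables are independent. Hence the bounded complex random variables $\psi(\xi S_0),\psi(\xi W_1),\dots,\psi(\xi W_k)$ are independent. For independent bounded complex variables the expectation of the product equals the product of the expectations. This can be checked by splitting into real and imaginary parts, or simply noted for finitely-valued variables as a finite sum that factorises. This gives the displayed identity
\[
\mathbb E\psi(\xi S_k)=\mathbb E\psi(\xi S_0)\prod_{i=1}^k\mathbb E\psi(\xi W_i).
\]

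For the bound, take absolute values. We have $|\mathbb E\psi(\xi S_0)|\le \mathbb E|\psi(\xi S_0)|=1$, since $S_0$ is arbitrary and only the trivial bound is available. Each $W_i$ has the same law as the $W$ of Lemma~\ref{lem:one-step-small-bias}, with $X_i$ uniform on $B_1$, $Y_i$ uniform on $B_2$, and the two independent. So for $\xi\neq0$ that lemma gives $|\mathbb E\psi(\xi W_i)|\le\rho$. Multiplying the $k$ factors yields $|\mathbb E\psi(\xi S_k)|\le\rho^k$.

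There is no real obstacle. The only point needing care is the justification of mutual independence of $S_0,W_1,\dots,W_k$. This includes the requirement that $X_i$ and $Y_i$ be independent within each pair, which is what makes Lemma~\ref{lem:one-step-small-bias} applicable to each $W_i$ individually. Both facts are part of the standing assumptions of Theorem~\ref{thm:digit-mixing}.
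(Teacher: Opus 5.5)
Your proposal is correct and follows the paper's proof: you factor $\psi(\xi S_k)$ using additivity of $\psi$, split the expectation using the mutual independence of $S_0,W_1,\dots,W_k$, and then bound $|\mathbb E\psi(\xi S_0)|\le 1$ and each $|\mathbb E\psi(\xi W_i)|\le\rho$ via Lemma~\ref{lem:one-step-small-bias}. Your explicit justification of that mutual independence just spells out what the paper compresses into ``by independence.''
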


\begin{proof}
By independence,
\begin{align*}
\mathbb E\psi(\xi S_k)
&=
\mathbb E\psi\left(
\xi S_0+\xi W_1+\cdots+\xi W_k
\right)\\
&=
\mathbb E\psi(\xi S_0)
\prod_{i=1}^k\mathbb E\psi(\xi W_i).
\end{align*}
Since $\left|\mathbb E\psi(\xi S_0)\right|\le1$ and, by Lemma~\ref{lem:one-step-small-bias}, $\left|\mathbb E\psi(\xi W_i)\right|\le\rho$
for every $i$, the result follows.
\end{proof}

We recall, (see e.g. \cite[Theorem~2.24]{LN}), that every $\F_p$-linear map from $\F_q$ to $\F_p$ can be represented
using the trace map $Tr(x)$.

\begin{lemma}
\label{lem:trace-representation}
For every $\F_p$-linear map $L:\F_q\longrightarrow\F_p$, there exists a unique $\theta\in\F_q$ such that $L(z)=Tr(\theta z)$ for all $z\in\F_q$. Moreover, $L$ is nonzero if and only if
$\theta\ne0$.
\end{lemma}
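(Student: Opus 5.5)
The plan is to prove Lemma~\ref{lem:trace-representation} by a dimension count combined with the nondegeneracy of the trace form. Consider the map $\Phi:\F_q\to \mathrm{Hom}_{\F_p}(\F_q,\F_p)$ given by $\Phi(\theta)=L_\theta$, where $L_\theta(z)=Tr(\theta z)$. First I will check that $\Phi$ is well defined. The trace is $\F_p$-linear and takes values in $\F_p$: we have $Tr(x)^p=Tr(x)$ because $x^{p^n}=x$, and Frobenius is additive and fixes $\F_p$. Hence each $L_\theta$ is an $\F_p$-linear functional. The map $\Phi$ is itself $\F_p$-linear in $\theta$.

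Both spaces have dimension $n$ over $\F_p$, since $\dim \mathrm{Hom}_{\F_p}(\F_q,\F_p)=\dim\F_q=n$. It therefore suffices to show that $\Phi$ is injective. Injectivity gives bijectivity, which is exactly the existence and uniqueness of $\theta$.

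Injectivity is the only real step. I need that $Tr$ is not identically zero on $\F_q$. The map $x\mapsto Tr(x)$ is given by a polynomial $x+x^p+\cdots+x^{p^{n-1}}$ of degree $p^{n-1}<q$. Such a polynomial has at most $p^{n-1}$ roots, so some $x_0\in\F_q$ satisfies $Tr(x_0)\neq 0$. Now suppose $\theta\neq0$. Setting $z=\theta^{-1}x_0$ gives $L_\theta(z)=Tr(x_0)\neq0$, so $L_\theta\neq0$. Thus $\ker\Phi=\{0\}$.

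This argument also gives the final assertion. If $\theta=0$, then $L_\theta=0$ trivially. If $\theta\neq0$, then $L_\theta\neq0$ by the computation above. Hence $L$ is nonzero if and only if $\theta\neq0$.

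I expect no serious obstacle. The one point needing care is the nonvanishing of the trace, which is handled by the degree bound on the roots of the trace polynomial.
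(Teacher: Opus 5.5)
Your proof is correct and complete. The paper gives no proof of its own and simply cites Lidl and Niederreiter (Theorem~2.24). Your argument is essentially the standard textbook one: linearity of $\theta\mapsto Tr(\theta\,\cdot)$, nonvanishing of $Tr$ because its polynomial has degree $p^{n-1}<q$, and then a count of dimensions to get bijectivity.
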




\begin{lemma}
\label{lem:projection-fourier-decay}
Let $L:\F_q\to\F_p$ be a nonzero $\F_p$-linear map. Then,
for every $a\in\F_p^*$, $\left|
\mathbb E e_p\bigl(a L(S_k)\bigr)
\right|
\le\rho^k$.
\end{lemma}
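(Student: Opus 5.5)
The argument reduces the projected character to a full additive character of $\F_q$ and then applies Lemma~\ref{lem:fourier-contraction}. By Lemma~\ref{lem:trace-representation}, since $L$ is nonzero there is a unique $\theta\in\F_q^*$ with $L(z)=Tr(\theta z)$ for all $z\in\F_q$.

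Fix $a\in\F_p^*$. Because $Tr$ is $\F_p$-linear and $a\in\F_p$, we have $aL(S_k)=a\,Tr(\theta S_k)=Tr(a\theta S_k)$ as elements of $\F_p$. Therefore
\[
e_p\bigl(aL(S_k)\bigr)=e_p\bigl(Tr(a\theta S_k)\bigr)=\psi(\xi S_k),\qquad \xi:=a\theta .
\]
One small point needs care: $e_p$ is applied to an element of $\F_p$ viewed as a residue mod $p$, so it is well defined. This is the same convention under which $\psi=e_p\circ Tr$ was defined, so nothing new is needed.

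Since $a\neq 0$ and $\theta\neq0$, the product $\xi=a\theta$ lies in $\F_q^*$. Lemma~\ref{lem:fourier-contraction} then gives
\[
\bigl|\mathbb E e_p(aL(S_k))\bigr|=\bigl|\mathbb E\psi(\xi S_k)\bigr|\le\rho^k,
\]
which is the claim.

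There is no real obstacle here. The only step requiring attention is to check that $\xi$ is nonzero, so that the nontrivial-character bound of Lemma~\ref{lem:one-step-small-bias} (through Lemma~\ref{lem:fourier-contraction}) applies. That check is exactly the second assertion of Lemma~\ref{lem:trace-representation}, combined with the fact that $a\neq0$.
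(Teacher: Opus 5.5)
Your proof is correct and matches the paper's argument: use Lemma~\ref{lem:trace-representation} to write $L(z)=Tr(\theta z)$ with $\theta\neq 0$. Then note $e_p(aL(S_k))=\psi(a\theta S_k)$ with $a\theta\in\F_q^*$, and apply Lemma~\ref{lem:fourier-contraction}.
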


\begin{proof}
By Lemma~\ref{lem:trace-representation}, there exists $\theta\in\F_q^*$ such that $L(z)= Tr(\theta z)$. For $a\in\F_p^*$, $a L(z)= Tr(a\theta z)$. The map $z\longmapsto e_p\bigl( Tr(a\theta z)\bigr)$ is a nontrivial additive character of $\F_q$, since $a\theta\ne0$. Lemma~\ref{lem:fourier-contraction} therefore gives  $\left| \mathbb E e_p\bigl(a L(S_k)\bigr) \right| \le\rho^k$.
\end{proof}


We now relate Fourier decay to several notions of equidistribution for probability distributions on finite vector spaces.

\begin{lemma}
\label{lem:entropy-chi-square}
For probability distributions $P$ and $Q$ on a finite set,
\[
D(P\Vert Q)
\le
\log\left(1+\chi^2(P\Vert Q)\right)
\]
and
\[
\|P-Q\|_{\mathrm{TV}}
\le
\frac12\sqrt{\chi^2(P\Vert Q)}.
\]
\end{lemma}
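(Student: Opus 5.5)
Both inequalities are standard information-theoretic facts about probability distributions $P,Q$ on a finite set. I will prove them directly, without using anything specific to $\F_q$. If $Q(x)=0$ for some $x$ with $P(x)>0$, then $\chi^2(P\Vert Q)=\infty$ under the usual convention and both bounds are trivial. So I will assume throughout that the support of $P$ is contained in the support of $Q$, and I will sum only over $x$ with $Q(x)>0$.

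For the entropy bound, the first step is to expand $1+\chi^2$. Using $\sum_x P(x)=\sum_x Q(x)=1$ in
$$\frac{(P(x)-Q(x))^2}{Q(x)}=\frac{P(x)^2}{Q(x)}-2P(x)+Q(x),$$
I get
\[
1+\chi^2(P\Vert Q)=\sum_x \frac{P(x)^2}{Q(x)}=\sum_{x:P(x)>0} P(x)\cdot\frac{P(x)}{Q(x)}.
\]
The second step is to write $D(P\Vert Q)=\sum_{x:P(x)>0}P(x)\log\frac{P(x)}{Q(x)}$. This is the expectation of $\log R$, where $R=P(Z)/Q(Z)$ and $Z\sim P$. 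Since $\log$ is concave, Jensen's inequality gives
$$D(P\Vert Q)=\E\log R\le \log \E R=\log\bigl(1+\chi^2(P\Vert Q)\bigr).$$

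For the total variation bound, I write
$$|P(x)-Q(x)|=\frac{|P(x)-Q(x)|}{\sqrt{Q(x)}}\cdot\sqrt{Q(x)}.$$
Applying Cauchy--Schwarz to the sum over $x$ gives
$$\sum_x|P(x)-Q(x)|\le \Bigl(\sum_x \frac{(P(x)-Q(x))^2}{Q(x)}\Bigr)^{1/2}\Bigl(\sum_x Q(x)\Bigr)^{1/2}=\sqrt{\chi^2(P\Vert Q)}.$$
Halving both sides gives the claimed inequality $\|P-Q\|_{\mathrm{TV}}\le \frac12\sqrt{\chi^2(P\Vert Q)}$.

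There is no real obstacle here. The only point requiring care is the support convention: $D(P\Vert Q)$ sums only over $x$ with $P(x)>0$, whereas $\chi^2(P\Vert Q)$ sums over $x$ with $Q(x)>0$. The identity $1+\chi^2=\sum P^2/Q$ makes the two sums compatible, because the terms with $P(x)=0$ vanish. In the paper's application $Q$ is the uniform distribution $U_p$ or $U_q$, so $Q$ has full support and this issue does not arise.
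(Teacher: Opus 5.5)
Your proof is correct and follows the paper's argument. The identity $1+\chi^2(P\Vert Q)=\sum_x P(x)^2/Q(x)$ and the Cauchy--Schwarz step for total variation are exactly the paper's; the only difference is that the paper cites monotonicity of R\'enyi divergence to get $D(P\Vert Q)\le D_2(P\Vert Q)$, whereas you prove this special case directly by Jensen's inequality for the concave logarithm, which makes your argument self-contained.
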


\begin{proof}
The first inequality follows from the monotonicity of R\'enyi
divergence in its order~\cite[Theorem~3]{vEH}. Indeed, $D(P\Vert Q)
\le
D_2(P\Vert Q)$,
where
\[
D_2(P\Vert Q)
=
\log\sum_x\frac{P(x)^2}{Q(x)}
=
\log\left(1+\chi^2(P\Vert Q)\right).
\]

For the second inequality, Cauchy--Schwarz gives
\begin{align*}
\sum_x|P(x)-Q(x)|
&=
\sum_x
\frac{|P(x)-Q(x)|}{\sqrt{Q(x)}}\sqrt{Q(x)}\\
&\le
\left(
\sum_x\frac{(P(x)-Q(x))^2}{Q(x)}
\right)^{1/2}
\left(\sum_xQ(x)\right)^{1/2}\\
&=
\sqrt{\chi^2(P\Vert Q)}.
\end{align*}
\end{proof}

\begin{lemma}
\label{lem:chi-square-fourier}
If $\mu$ is a probability distribution on $\F_q$, then
\[
\chi^2(\mu\Vert U_q)
=
\sum_{\xi\in\F_q^*}|\widehat\mu(\xi)|^2.
\]
\end{lemma}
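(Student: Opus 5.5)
The plan is to compute both sides directly and match them using orthogonality of additive characters, with the normalisation $\widehat{\mu}(a)=\sum_{z}\mu(z)\overline{\psi_a(z)}$ fixed at the start of this section. Note that this normalisation is unnormalised: it differs by a factor of $q$ from the $\hat f$ of \eqref{eqn:frepch}.

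First, expand the left-hand side. Since $U_q(x)=1/q$,
\[
\chi^2(\mu\Vert U_q)=q\sum_{x\in\F_q}\Bigl(\mu(x)-\frac1q\Bigr)^2
=q\sum_x\mu(x)^2-2\sum_x\mu(x)+\frac1q\cdot q
=q\sum_x\mu(x)^2-1,
\]
using $\sum_x\mu(x)=1$.

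Next, compute the full sum of squared Fourier coefficients. Expanding $|\widehat\mu(a)|^2=\sum_{z,w}\mu(z)\mu(w)\psi_a(w-z)$ and summing over $a\in\F_q$, orthogonality gives $\sum_{a}\psi(a(w-z))=q\cdot 1_{z=w}$. Hence
\[
\sum_{a\in\F_q}|\widehat\mu(a)|^2=q\sum_z\mu(z)^2.
\]
Equivalently, this is \eqref{eqn:Plancherel} applied with $f=g=\mu$ after rescaling by $q$.

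Finally, separate the trivial character. We have $\widehat\mu(0)=\sum_z\mu(z)=1$, so
\[
\sum_{\xi\in\F_q^*}|\widehat\mu(\xi)|^2=q\sum_z\mu(z)^2-1,
\]
which equals the expression for $\chi^2(\mu\Vert U_q)$ found in the first step.

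There is no real obstacle here. The only point requiring care is the factor of $q$ between the paper's two Fourier normalisations, so I will verify the orthogonality relation directly rather than quote \eqref{eqn:Plancherel} verbatim. For the application in Theorem~\ref{thm:full-field-mixing}, note also that $\widehat{\mu_{S_k}}(\xi)=\E\,\overline{\psi(\xi S_k)}$ has modulus equal to $|\E\,\psi(\xi S_k)|$, which Lemma~\ref{lem:fourier-contraction} bounds by $\rho^k$.
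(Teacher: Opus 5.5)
Your proof is correct and is essentially the paper's argument: both rest on Parseval (orthogonality of additive characters) together with isolating the trivial frequency. The only difference is bookkeeping: the paper first centres $\mu$ via $f=\mu-1/q$, so that $\widehat f(0)=0$ and $q\widehat f(\xi)=\widehat\mu(\xi)$ for $\xi\neq0$, whereas you expand $\chi^2(\mu\Vert U_q)=q\sum_z\mu(z)^2-1$ and then subtract $|\widehat\mu(0)|^2=1$.
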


\begin{proof}
By definition,
\[
\chi^2(\mu\Vert U_q)
=
q\sum_{z\in\F_q}
\left(\mu(z)-\frac1q\right)^2.
\]
Let $f(z)=\mu(z)-1/q$. Since
\[
\sum_z f(z)=0,
\]
we have $\widehat f(0)=0$. For
$\xi\ne0$,
\[
q\widehat f(\xi)
=
\sum_z\mu(z)\overline{\psi(\xi z)}
=
\widehat\mu(\xi),
\]
because the Fourier transform of the uniform distribution vanishes at every
nonzero frequency. Parseval's identity gives
\[
\sum_z|f(z)|^2
=
q\sum_{\xi\in\F_q}|\widehat f(\xi)|^2.
\]
Multiplying by $q$ and using $\widehat f(0)=0$, we obtain
\[
\chi^2(\mu\Vert U_q)
=
q^2\sum_{\xi\in\F_q^*}|\widehat f(\xi)|^2
=
\sum_{\xi\in\F_q^*}|\widehat\mu(\xi)|^2,
\]
as required.
\end{proof}

\begin{proof}[Proof of Theorem~\ref{thm:digit-mixing}]
Fix a nonzero $\F_p$-linear map $L$, and let $P_{k,L}$
denote the distribution of $L(S_k)$ on $\F_p$. For every
$s\in\F_p^*$, Lemma~\ref{lem:projection-fourier-decay} gives
\[
|\widehat P_{k,L}(s)|
=
\left|
\mathbb E e_p(sL(S_k))
\right|
\le\rho^k.
\]
Lemma~\ref{lem:chi-square-fourier} therefore gives
\begin{align*}
\chi^2(P_{k,L}\Vert U_p)
=
\sum_{s\in\F_p^*}
|\widehat P_{k,L}(s)|^2
\le
\sum_{s\in\F_p^*}\rho^{2k}
=
(p-1)\rho^{2k}.
\end{align*}
This proves \eqref{eq:projection-chi-square}. Applying Lemma~\ref{lem:entropy-chi-square} gives
\[
\|P_{k,L}-U_p\|_{\mathrm{TV}}
\le
\frac12
\sqrt{\chi^2(P_{k,L}\Vert U_p)}
\le
\frac12\sqrt{p-1}\,\rho^k,
\]
which proves \eqref{eq:projection-TV}. Since $U_p(t)=1/p$, one has $D(P_{k,L}\Vert U_p)
=
\log p-H(P_{k,L})$.
Therefore
\begin{align*}
\log p-H(P_{k,L})
&=
D(P_{k,L}\Vert U_p)\\
&\le
\log\left(
1+\chi^2(P_{k,L}\Vert U_p)
\right)\\
&\le
\log\left(1+(p-1)\rho^{2k}\right).
\end{align*}
Rearranging proves \eqref{eq:projection-entropy}. Finally, each coordinate map $\pi_j:\F_q\to\F_p$ is a nonzero $\F_p$-linear map. Hence
\[
H(\pi_j(S_k))
\ge
\log p-
\log\left(1+(p-1)\rho^{2k}\right)
\]
for every $j$. Summing over $j=1,\dots,n$ yields
\[
H_\Omega(S_k)
\ge
n\log p-
n\log\left(1+(p-1)\rho^{2k}\right).
\]
\end{proof}

\begin{proof}[Proof of Theorem~\ref{thm:full-field-mixing}]
Let $\mu_k$ denote the distribution of $S_k$. By
Lemma~\ref{lem:fourier-contraction}, $|\widehat\mu_k(\xi)|\le\rho^k$ for every $\xi\in\F_q^*$. Hence
\begin{align*}
\chi^2(\mu_k\Vert U_q)
&=
\sum_{\xi\in\F_q^*}
|\widehat\mu_k(\xi)|^2
\le
(q-1)\rho^{2k},
\end{align*}
where we used Lemma~\ref{lem:chi-square-fourier}. This proves
\eqref{eq:full-chi-square}.

Lemma~\ref{lem:entropy-chi-square} now gives
\[
\|\mu_k-U_q\|_{\mathrm{TV}}
\le
\frac12\sqrt{q-1}\,\rho^k.
\]
Also, $D(\mu_k\Vert U_q)
=
\log q-H(\mu_k)$,
so
\begin{align*}
\log q-H(\mu_k)
&\le
\log\left(1+\chi^2(\mu_k\Vert U_q)\right)\\
&\le
\log\left(1+(q-1)\rho^{2k}\right).
\end{align*}
This proves \eqref{eq:full-entropy}.
\end{proof}


\begin{thebibliography}{}
\bibitem{Bak} R. C. Baker,
`Kloosterman sums with prime variable',
{\it Acta Arith.\/}, {\bf 156(4)} (2012), 351--372.

\bibitem{Bou} J.~Bourgain,
`More on the sum-product phenomenon in prime fields and its applications',
{\it Int. J. Number Theory\/}, {\bf 1(1)} (2005), 1--32.

\bibitem{BouGar2} J.~Bourgain and M.~Z.~Garaev,
`Sumsets of reciprocals in prime fields and multilinear Kloosterman sums',
{\it Izv. Ross. Akad. Nauk Ser. Mat.\/}, {\bf 78(4)} (2014), 19--72; translation in {\it Izv. Mat.\/}, {\bf 78} (2014), 656--707.

\bibitem{Chang} M.-C.~Chang,
`An estimate of incomplete mixed character sums', in:
{\it An irregular mind\/}, Bolyai Society Mathematical studies, 21 (Springer, Berlin 2010), 242--250.

\bibitem{Kon} S.V. Konyagin, {\it Estimates of character sums in finite fields}, {\it Math. Notes\/}, {88} (2010), 503--515.

\bibitem{KonShk} S.V. Konyagin, I.D. Shkredov, {\it New results on sums and products in $\R$}, {\it Proc. Steklov Inst. Math.\/}, {294} (2016), 78--88.

\bibitem{LN}
R.~Lidl and H.~Niederreiter,
\textit{Finite Fields},
2nd ed.,
Encyclopedia of Mathematics and its Applications, Vol.~20,
Cambridge Univ. Press, Cambridge, 1997.

\bibitem{TaoVu}
T. Tao and V. Vu, `Additive combinatorics', {\it Cambridge Univ. Press\/}, 2006.

\bibitem{vEH}
T.~van Erven and P.~Harremo\"es,
`R\'enyi divergence and Kullback--Leibler divergence',
{\it IEEE Trans. Inform. Theory\/},
{\bf 60(7)} (2014), 3797--3820.
 \end{thebibliography}
\end{document}